\documentclass[12 pt]{amsart}
\usepackage{amssymb,latexsym,amsmath,amscd,amsthm,graphicx, color}
\usepackage[all]{xy}
\usepackage{pgf,tikz}
\usepackage{mathrsfs}
\usepackage{cite}

\usetikzlibrary{arrows}
\definecolor{uuuuuu}{rgb}{0.26666666666666666,0.26666666666666666,0.26666666666666666}
\definecolor{xdxdff}{rgb}{0.49019607843137253,0.49019607843137253,1.}
\definecolor{ffqqqq}{rgb}{1.,0.,0.}

\raggedbottom

\pagestyle{empty}
\definecolor{uuuuuu}{rgb}{0.26666666666666666,0.26666666666666666,0.26666666666666666}
\definecolor{qqwuqq}{rgb}{0.,0.39215686274509803,0.}
\definecolor{zzttqq}{rgb}{0.6,0.2,0.}
\definecolor{xdxdff}{rgb}{0.49019607843137253,0.49019607843137253,1.}
\definecolor{qqqqff}{rgb}{0.,0.,1.}
\definecolor{cqcqcq}{rgb}{0.7529411764705882,0.7529411764705882,0.7529411764705882}

%

\raggedbottom

\pagestyle{empty}

\definecolor{uuuuuu}{rgb}{0.26666666666666666,0.26666666666666666,0.26666666666666666}
\definecolor{qqwuqq}{rgb}{0.,0.39215686274509803,0.}
\definecolor{zzttqq}{rgb}{0.6,0.2,0.}
\definecolor{xdxdff}{rgb}{0.49019607843137253,0.49019607843137253,1.}
\definecolor{qqqqff}{rgb}{0.,0.,1.}
\definecolor{cqcqcq}{rgb}{0.7529411764705882,0.7529411764705882,0.7529411764705882}

\setlength{\oddsidemargin}{0 in} \setlength{\evensidemargin}{0 in}
\setlength{\textwidth}{6.75 in} \setlength{\topmargin}{-.6 in}
\setlength{\headheight}{.00 in} \setlength{\headsep}{.3 in }
\setlength{\textheight}{10 in} \setlength{\footskip}{0 in}

\theoremstyle{plain}

\newtheorem{theorem}[subsection]{Theorem}

\newtheorem{lemma}[subsection]{Lemma}

\newtheorem{rem}[subsection]{Remark}
\newtheorem{prop}[subsection]{Proposition}
\newtheorem{defi}[subsection]{Definition}

\theoremstyle{definition}

\newtheorem{cor}[subsection]{Corollary}

\newtheorem{remark}[subsection]{Remark}


\newcommand{\uu}{\cup}
\newcommand{\ii}{\cap}
\newcommand{\UU}{\bigcup}

\newcommand{\ci}{\subseteq}
\newcommand{\sci}{\subset}
\newcommand{\es}{\emptyset}
\newcommand{\set}[1]{\{#1\}}


\newcommand{\ga}{\alpha}
\newcommand{\gb}{\beta}

\newcommand{\gd}{\delta}
\renewcommand{\gg}{\gamma}

\newcommand{\go}{\omega}

\newcommand{\gs}{\sigma}

\newcommand{\tit}{\textit}

\newcommand{\D}[1]{\mathbb{#1}}

\newcommand{\te}{\text}

\begin{document}

To appear, Houston Journal of Mathematics
\title{The quantization of the standard triadic Cantor distribution}

\author{Mrinal Kanti Roychowdhury}
\address{School of Mathematical and Statistical Sciences\\
University of Texas Rio Grande Valley\\
1201 West University Drive\\
Edinburg, TX 78539-2999, USA.}
\email{mrinal.roychowdhury@utrgv.edu}

\subjclass[2010]{60Exx, 28A80, 94A34.}
\keywords{Cantor set, probability distribution, optimal sets, quantization error, quantization dimension, quantization coefficient}
\thanks{ }

\date{}
\maketitle

\pagestyle{myheadings}\markboth{Mrinal Kanti Roychowdhury}{The quantization of the standard triadic Cantor distribution}

\begin{abstract} The quantization scheme in probability theory deals with finding a best approximation of a given probability distribution by a probability distribution that is supported on finitely many points. For a given $k\geq 2$, let $\{S_j : 1\leq j\leq k\}$ be a set of $k$ contractive similarity mappings such that $S_j(x)=\frac 1 {2k-1} x +\frac{2 (j-1)} {2k-1}$ for all $x\in \mathbb R$, and let  $P= \frac 1 k  \sum_{j=1}^kP\circ S_j^{-1}$. Then, $P$ is a unique Borel probability measure on $\mathbb R$ such that $P$ has support the Cantor set generated by the similarity mappings $S_j$ for $1\leq j\leq k$. In this paper, for the probability measure $P$, when $k=3$, we investigate the optimal sets of $n$-means and the $n$th quantization errors for all $n\geq 2$. We further show that the quantization coefficient does not exist though the quantization dimension exists.
\end{abstract}

\section{Introduction}
One of the main mathematical aims of the quantization problem is to study the
error in the approximation of a given probability measure with a probability measure of finite support. We refer to
\cite{GL1, GL3, GL4, GL5, P} for more theoretical results, and \cite{P1, P2} for promising applications of quantization theory.
One may see \cite{GG, GN, Z} for its deep background in information theory and engineering technology.
Let $\D R^d$ denote the $d$-dimensional Euclidean space, $\|\cdot\|$ denote the Euclidean norm on $\D R^d$ for any $d\geq 1$, and $n\in \D N$. For a finite set $\ga \sci \D R^d$, the number $\int \min_{a \in \ga} \|x-a\|^2 dP(x)$ is often referred to as the \tit{cost} or \tit{distortion error} for $\ga$, and is denoted by $V(P; \ga)$. Then, the $n$th quantization error, denoted by $V_n:=V_n(P)$, is defined by
\begin{equation*} \label{eq0} V_n=\inf \Big\{V(P; \ga) : \alpha \subset \mathbb R^d, \text{ card}(\alpha) \leq n \Big\}.\end{equation*}
Such a set $\ga$ for which the infimum occurs and contains no more than $n$ points is called an \tit{optimal set of $n$-means}, and is denoted by $\ga_n:=\ga_n(P)$.
 It is known that for a continuous probability measure an optimal set of $n$-means always has exactly $n$ elements (see \cite{GL1}). To see some work in the direction of optimal sets of $n$-means, one is referred to \cite{CR, DR1, GL2,  R1, R2, R3, R4, R5, RR}. The number
\[ D(P):=\lim_{n\to \infty}  \frac{2\log n}{-\log V_n(P)},\]
 if it exists, is called the \tit{quantization dimension} of $P$ and is denoted by $D(P)$. For any $s\in (0, +\infty)$, the number
 \[\lim_{n\to \infty}n^{\frac 2 s} V_n(P),\] if it exists, is called the $s$-dimensional \tit{quantization coefficient} for $P$.
Given a finite subset $\ga\sci \D R^d$,  the Voronoi region generated by $a\in \ga$ is the set of all elements in $\D R^d$ which are closer to $a$ than to any other element in $\ga$.
 Let us now state the following proposition (see \cite{GG, GL1}).
\begin{prop} \label{prop0}
Let $\ga$ be an optimal set of $n$-means, $a \in \alpha$, and $M(a|\ga)$ be the Voronoi region generated by $a\in \ga$, i.e.,
$M(a|\ga)=\{x \in \mathbb R^d : \|x-a\|=\min_{b \in \alpha} \|x-b\|\}.$
Then, for every $a \in\alpha$,
$(i)$ $P(M(a|\ga))>0$, $(ii)$ $ P(\partial M(a|\ga))=0$, $(iii)$ $a=E(X : X \in M(a|\ga))$.
\end{prop}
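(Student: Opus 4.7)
The plan is to establish the three assertions in the order (i), (iii), (ii). This ordering is natural because (iii) presupposes that the conditional expectation on $M(a|\ga)$ is defined, which is precisely (i), and (ii) reduces to an essentially geometric observation once the positive-mass and conditional-mean properties are in hand.

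For (i), I proceed by contradiction. Suppose $P(M(a|\ga))=0$ for some $a\in\ga$. Splitting the integral defining $V(P;\ga)$ into the pieces over $M(a|\ga)$ and its complement shows that the first piece contributes zero, while on $M(a|\ga)^c$ the minimum of $\|x-b\|^2$ over $b\in\ga$ already agrees with the minimum over $b\in\ga\setminus\{a\}$. Hence
\[
V(P;\ga\setminus\{a\}) \;=\; V(P;\ga) \;=\; V_n,
\]
and since $|\ga\setminus\{a\}|\le n-1$ this yields $V_{n-1}\le V_n$. Because the self-similar Cantor measure $P$ is non-atomic with infinite support, the quantization errors are strictly decreasing in $n$ (as recorded in \cite{GL1}), contradicting this inequality.

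For (iii), given $a\in\ga$ with $P(M(a|\ga))>0$ as supplied by (i), set $a^{\ast}=E(X\mid X\in M(a|\ga))$ and form $\ga'=(\ga\setminus\{a\})\cup\{a^{\ast}\}$. Using the Voronoi decomposition induced by $\ga$ as a (generally non-Voronoi) partition for the new codebook $\ga'$, I obtain the upper bound
\[
V(P;\ga') \;\le\; \sum_{b\in\ga\setminus\{a\}}\int_{M(b|\ga)}\|x-b\|^2\,dP(x) \;+\; \int_{M(a|\ga)}\|x-a^{\ast}\|^2\,dP(x).
\]
Since $y\mapsto\int_{M(a|\ga)}\|x-y\|^2\,dP(x)$ is minimized precisely at the conditional mean, the last integral is $\le \int_{M(a|\ga)}\|x-a\|^2\,dP(x)$, and hence $V(P;\ga')\le V(P;\ga)=V_n$. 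The optimality of $\ga$ forces equality throughout, and the strict convexity of that same map (which holds because $P(M(a|\ga))>0$) forces the unique minimizer $a^{\ast}$ to coincide with $a$.

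For (ii), on $\mathbb R$ the Voronoi region $M(a|\ga)$ is an interval whose boundary consists of at most two midpoints between $a$ and its neighbours in $\ga$; the non-atomicity of $P$ then gives $P(\pa M(a|\ga))=0$ at once. I expect the main obstacle to be the strict-decrease fact $V_{n-1}>V_n$ invoked in (i): it is not a combinatorial triviality but a genuine statement about the quantization-error sequence of non-atomic measures, requiring an appeal to the general theory in \cite{GL1}. Once that is granted, the variational argument in (iii) and the geometric argument in (ii) are routine consequences of the strict convexity of squared Euclidean distance.
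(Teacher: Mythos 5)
The paper itself offers no proof of this proposition: it simply cites it from \cite{GG, GL1} and moves on. So there is no ``paper's own proof'' to compare against; what you have done is reconstruct the standard Graf--Luschgy argument, and the reconstruction is essentially sound. For (i), the reduction $V(P;\ga\setminus\{a\})=V(P;\ga)=V_n$ followed by the strict-decrease fact $V_{n-1}>V_n$ is exactly the right idea, and you have correctly identified that the strict decrease is the one genuinely imported fact (it holds because $\operatorname{supp}P$ is infinite; in \cite{GL1} it is proved independently of (i), so there is no circularity). For (iii), the perturbation argument---replace $a$ by the conditional mean $a^\ast$, bound $V(P;\ga')$ from above by evaluating on the old Voronoi partition, and force $a^\ast=a$ via strict convexity of $y\mapsto\int_{M(a|\ga)}\|x-y\|^2\,dP$---is the canonical proof and is complete once (i) supplies $P(M(a|\ga))>0$.

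The one place you have quietly narrowed the claim is (ii). The proposition is stated in $\mathbb{R}^d$, but your argument (``$M(a|\ga)$ is an interval whose boundary is at most two points; non-atomicity finishes it'') is specific to $d=1$. For general $d$, $\partial M(a|\ga)$ lies in a finite union of bisecting hyperplanes $H_{a,b}$, and non-atomicity alone does \emph{not} rule out $P(H_{a,b})>0$; one needs an additional perturbation argument (contained in \cite{GL1}, Theorem~4.2) showing that an optimal codebook cannot charge such a hyperplane with positive mass. Since the paper only applies the proposition to the triadic Cantor distribution on $\mathbb{R}$, your shortcut covers every case actually used, but as a proof of the proposition as stated it has a gap at (ii) for $d\ge 2$. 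If you want a self-contained argument matching the generality of the statement, you should either restrict the statement to $\mathbb{R}$ (which is all the paper needs) or supply the hyperplane-charging contradiction.
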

From the above proposition, we can say that if $\ga$ is an optimal set of $n$-means for $P$, then each $a\in \ga$ is the conditional expectation of the random variable $X$ given that $X$ takes values in the Voronoi region of $a$. Sometimes, we also refer to such an $a\in \ga$ as the centroid of its own Voronoi region.
In this regard, interested readers can see \cite{DFG, DR2, R1}.

Let $k\geq 2$ be a positive integer, and let $\set{S_j : 1 \leq j\leq k}$ be a set of contractive similarity mapping such that $S_j(x)=\frac 1 {2k-1} x +\frac{2 (j-1)} {2k-1}$ for all $x\in \D R$, and let  $P=\frac 1 k\sum_{j=1}^k  P\circ S_j^{-1}$. Then, $P$ is a unique Borel probability measure on $\D R$, and $P$ has support the Cantor set $C$ generated by the similarity mappings $S_j$ for $1\leq j\leq k$, and $C$ satisfies the invariance equality $C=\mathop{\uu}\limits_{j=1}^k S_j(C)$  (see \cite{F, H}). The Cantor set $C$ generated by the $k$ similarity mappings is called the \tit{$k$-adic Cantor set},  more specifically the \tit{standard $k$-adic Cantor set}, and the probability measure $P$ is called the \tit{$k$-adic Cantor distribution}, more specifically the \tit{standard $k$-adic Cantor distribution}.  If $k=2$, then we have two similarity mappings given by $S_1(x)=\frac 1 3 x$ and $S_2(x)=\frac 13 x +\frac 23$ for all $x\in \D R$, and then the probability measure $P$ is given by $P=\frac 12 P\circ S_1^{-1}+\frac 12 P\circ S_2^{-1}$, which has support the classical Cantor set $C$ satisfying $C=S_1(C)\uu S_2(C)$. For this dyadic Cantor distribution, in \cite{GL2}, Graf and Luschgy determined the optimal sets of $n$-means and the $n$th quantization errors for all $n\geq 2$. They also showed that the quantization dimension $D(P)$ of $P$ exists, and equals the Hausdorff dimension of the invariant set $C$, but the quantization coefficient does not exist.

 By a word $\gs$ of length $n$, where $n\geq 1$, over the alphabet $\{1, 2, 3\}$, it is meant that $\gs:=\gs_1\gs_2\cdots \gs_n$, where $\gs_j\in \set{1, 2, 3}$ for all $1\leq j\leq n$. By $\{1, 2, 3\}^n$, we denote the set of all words
over the alphabet $\{1, 2, 3\}$ of some finite length $n\geq 0$. Notice that the empty word $\es$ has length zero. For $\gs:=\gs_1\gs_2 \cdots\gs_n \in \{ 1, 2, 3\}^n$, by $S_\gs$ it is meant that $S_\gs:=S_{\gs_1}\circ \cdots \circ S_{\gs_n}$. For the empty word $\es$, by $S_\es$ it is meant the identity mapping on $\D R$. Write \[\set{1, 2, 3}^\ast:=\mathop{\uu}\limits_{n=0}^\infty \set{1, 2, 3}^n,\]
i.e., $\set{1, 2, 3}^\ast$ denotes the set of all words over the alphabet $\set{1, 2, 3}$ including the empty word $\es$.
Let $X$ be a random variable with probability distribution $P$. For words $\gb, \gg, \cdots, \gd$ in $\set{1,2, 3}^\ast$, by $a(\gb, \gg, \cdots, \gd)$ we mean the conditional expectation of the random variable $X$ given $J_\gb\uu J_\gg \uu\cdots \uu J_\gd,$ i.e.,
\begin{equation*} \label{eq45} a(\gb, \gg, \cdots, \gd)=E(X|X\in J_\gb \uu J_\gg \uu \cdots \uu J_\gd)=\frac{1}{P(J_\gb\uu \cdots \uu J_\gd)}\int_{J_\gb\uu \cdots \uu J_\gd} x dP(x).
\end{equation*}

\begin{defi}\label{defi0} For $n\in \D N$ with $n\geq 3$ let $\ell(n)$ be the unique natural number with $3^{\ell(n)} \leq n< 3^{\ell(n)+1}$. Write
$\ga_2:=\set{a(1, 21), a(22, 23, 3)}$ and $\ga_3:=\set{a(1), a(2), a(3)}$. For $n\geq 3$, define $\ga_n:=\ga_n(I)$ as follows: \[\ga_n(I)=\left\{\begin{array}{cc}
\set{a(\go) : \go \in \set{1, 2, 3}^{\ell(n)}\setminus I}\UU \mathop{\uu}\limits_{\go\in I} S_\go(\ga_2) & \te{ if } 3^{\ell(n)}\leq n\leq 2\cdot 3^{\ell(n)},\\
 \set{S_\go(\ga_2) : \go \in \set{1, 2, 3}^{\ell(n)}\setminus I}\UU \mathop{\uu}\limits_{\go\in I} S_\go(\ga_3) & \te{ if } 2\cdot 3^{\ell(n)}< n< 3^{\ell(n)+1},
\end{array}
\right.\] where $I \sci \set{1, 2, 3}^{\ell(n)}$ with $\te{card}(I)=n-3^{\ell(n)}$ if $3^{\ell(n)}\leq n\leq 2\cdot3^{\ell(n)}$; and
$\te{card}(I)=n-2\cdot 3^{\ell(n)}$ if $2\cdot 3^{\ell(n)}< n< 3^{\ell(n)+1}$.
\end{defi}

In this paper, in Section~\ref{sec2} we show that for all $n\geq 2$, the sets $\ga_n$ given by Definition~\ref{defi0} form the optimal sets of $n$-means for the standard triadic Cantor distribution $P$. In Section~\ref{sec3}, we show that the quantization coefficient does not exist though the quantization dimension $D(P)$ exists. Notice that the probability measure $P$ is symmetric about the point $\frac 12$, i.e., if two intervals of equal lengths are equidistant from the point $\frac 12$, then they have the same probability. Thus, it seems, which is true in the case of dyadic Cantor distribution (see \cite{GL2}), that if the closed interval $[0, 1]$ is partitioned in the middle, then the conditional expectations of the left half $[0, \frac 12]$, and the right half $[\frac 12, 1]$ will form the optimal set of two-means. In Proposition~\ref{prop0001}, we show that it is not true in the case of triadic Cantor distribution. The result in this paper extends the well-known result for the dyadic Cantor distribution given by Graf-Luschgy, and we are grateful to say that the work in this paper was motivated by their work (see \cite{GL2}).

\section{Preliminaries} \label{sec1}
Let $S_j$ for $1\leq j\leq 3$ be the contractive similarity mappings on $\D R$ given by $S_j(x)=\frac 1 {5} x +\frac 2 5(j-1)$ for all $x\in \D R$. For $\gs:=\gs_1\gs_2 \cdots\gs_n \in \{ 1, 2, 3\}^n$, set $J_\gs:=S_{\gs}([0, 1])$, where  $S_\gs:=S_{\gs_1}\circ \cdots \circ S_{\gs_n}$. For the empty word $\es$, write $J:=J_\es=S_\es([0,1])=[0, 1]$. Then, the set $C:=\bigcap_{n\in \mathbb N} \bigcup_{\gs \in \set{1, 2, 3}^n} J_\gs$ is known as the \textit{Cantor set} generated by the mappings $S_j$, and equals the support of the probability measure $P$ given by $P=\sum_{j=1}^3 \frac 1 3 P\circ S_j^{-1}$. For $\gs=\gs_1\gs_2 \cdots\gs_n \in \{ 1, 2, 3\}^\ast$, $n\geq 0$, write
$p_\gs:=\frac 1{3^n}$ and $s_\gs:=\frac 1 {5^n}$.

Let us now give the following lemmas. The proofs are similar to the similar lemmas in \cite{GL2}.

\begin{lemma} \label{lemma1}
Let $f : \mathbb R \to \mathbb R^+$ be Borel measurable and $k\in \mathbb N$. Then,
\[\int f dP=\sum_{\sigma \in \{1, 2,3\}^k} \frac 1 {3^k} \int (f \circ S_\sigma)(x) dP(x).\]
\end{lemma}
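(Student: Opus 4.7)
The plan is to reduce the identity to the one-step self-similarity relation $P = \sum_{j=1}^{3} \frac{1}{3}\, P \circ S_j^{-1}$ that defines $P$, and then to iterate $k$ times.

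First I would verify the case $k = 1$. Linearity of the integral in the measure variable gives
\[\int f\, dP \;=\; \sum_{j=1}^{3} \frac{1}{3} \int f\, d(P \circ S_j^{-1}).\]
The standard change-of-variables formula $\int f\, d(\mu \circ T^{-1}) = \int (f \circ T)\, d\mu$, valid for nonnegative Borel $f$ (it holds on indicators by the very definition of the pushforward, extends to simple functions by linearity, and to nonnegative measurable $f$ by monotone convergence), then turns this into
\[\int f\, dP \;=\; \sum_{j=1}^{3} \frac{1}{3} \int (f \circ S_j)\, dP,\]
which is the base case.

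For the inductive step from $k$ to $k+1$, I would apply the $k=1$ identity to each of the functions $f \circ S_\sigma$ occurring on the right-hand side of the inductive hypothesis. Using that $S_\sigma \circ S_j = S_{\sigma j}$ for $\sigma \in \{1,2,3\}^k$ and $j \in \{1,2,3\}$, and that concatenation $(\sigma, j) \mapsto \sigma j$ is a bijection between $\{1,2,3\}^k \times \{1,2,3\}$ and $\{1,2,3\}^{k+1}$, the resulting double sum collapses into a single sum over $\tau \in \{1,2,3\}^{k+1}$ with the correct prefactor $1/3^{k+1}$.

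The only delicate point is the change-of-variables step; this is why the hypothesis $f \ge 0$ is imposed, as it permits the monotone-convergence argument without any integrability condition. Beyond this the proof is purely formal bookkeeping, so I do not anticipate a substantive obstacle.
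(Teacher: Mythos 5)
Your proof is correct and is exactly the standard argument: verify $k=1$ from the defining self-similarity relation $P=\sum_{j=1}^{3}\frac{1}{3}P\circ S_j^{-1}$ together with the pushforward change-of-variables formula for nonnegative Borel functions, then iterate using $S_\sigma\circ S_j=S_{\sigma j}$ and the bijection $\{1,2,3\}^k\times\{1,2,3\}\to\{1,2,3\}^{k+1}$. The paper itself omits the proof and defers to the analogous Lemma~3.2 of Graf--Luschgy \cite{GL2}, where precisely this induction is carried out, so you have supplied the intended argument; the only incidental remark is that the paper's statement has a typo, writing $r\in\mathbb N$ but using $k$ in the formula, which your write-up silently and correctly normalizes.
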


\begin{lemma} \label{lemma2} Let $X$ be a random variable with probability distribution $P$. Then,
$E(X)=\frac 12  \text{ and } V:=V(X)=\frac 19,$ and for any $x_0 \in \mathbb R$,
$\int (x-x_0)^2 dP(x) =V +(x_0-\frac 12)^2.$
\end{lemma}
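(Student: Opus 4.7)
The plan is to exploit the self-similarity relation $P=\sum_{j=1}^{3}\frac{1}{3}\,P\circ S_j^{-1}$ via Lemma~\ref{lemma1} (with $r=1$), which converts integrals against $P$ into averages of integrals of $f\circ S_j$ against $P$. This turns the problem of computing $E(X)$ and $V(X)$ into a pair of small linear equations in the unknown moments.

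First I would compute $E(X)$. Writing $m:=E(X)$ and applying Lemma~\ref{lemma1} to $f(x)=x$, I get
\begin{equation*}
m=\sum_{j=1}^{3}\frac{1}{3}\,E[S_j(X)]=\sum_{j=1}^{3}\frac{1}{3}\Bigl(\tfrac{1}{5}m+\tfrac{2}{5}(j-1)\Bigr)=\tfrac{1}{5}m+\tfrac{2}{5},
\end{equation*}
so $\tfrac{4}{5}m=\tfrac{2}{5}$ and $m=\tfrac{1}{2}$. (This also follows from the symmetry of $C$ about $\tfrac{1}{2}$, but the self-similar derivation is the template I need for the variance.)

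Next I would compute $V=E[(X-\tfrac{1}{2})^{2}]$ by the same device. Setting $c_j:=\tfrac{2}{5}(j-1)-\tfrac{1}{2}$ (so $c_1=-\tfrac{1}{2}$, $c_2=-\tfrac{1}{10}$, $c_3=\tfrac{3}{10}$), Lemma~\ref{lemma1} applied to $f(x)=(x-\tfrac{1}{2})^2$ gives
\begin{equation*}
V=\sum_{j=1}^{3}\frac{1}{3}\,E\!\left[\bigl(\tfrac{1}{5}X+c_j\bigr)^{2}\right]=\frac{1}{25}\bigl(V+\tfrac{1}{4}\bigr)+\frac{1}{15}\sum_{j=1}^{3}c_j+\frac{1}{3}\sum_{j=1}^{3}c_j^{2},
\end{equation*}
after expanding the square and using $E(X)=\tfrac{1}{2}$ and $E(X^{2})=V+\tfrac{1}{4}$. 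A short computation gives $\sum c_j=-\tfrac{3}{10}$ and $\sum c_j^{2}=\tfrac{7}{20}$, which reduces the equation to $\tfrac{24V}{25}=\tfrac{8}{75}$, hence $V=\tfrac{1}{9}$.

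Finally, the identity $\int(x-x_{0})^{2}dP(x)=V+(x_{0}-\tfrac{1}{2})^{2}$ is the standard translation formula: expand $(x-x_{0})^{2}=(x-\tfrac{1}{2})^{2}-2(x_{0}-\tfrac{1}{2})(x-\tfrac{1}{2})+(x_{0}-\tfrac{1}{2})^{2}$ and integrate, using $E(X-\tfrac{1}{2})=0$ from the first step. There is no real obstacle here; the only arithmetic I need to watch is the computation of $\sum c_j$ and $\sum c_j^{2}$, which controls the final value $V=\tfrac{1}{9}$ and will be used throughout Sections~\ref{sec2} and \ref{sec3}.
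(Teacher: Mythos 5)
Your proof is correct, and it is exactly the standard self-similarity argument (the paper itself omits the proof and simply points to \cite{GL2}, where the dyadic analogue is proved by precisely this method). The arithmetic checks out: $m=\tfrac12$, $\sum c_j=-\tfrac{3}{10}$, $\sum c_j^2=\tfrac{7}{20}$, and $\tfrac{24V}{25}=\tfrac{8}{75}$ gives $V=\tfrac19$; the final identity is the usual bias--variance decomposition using $E(X-\tfrac12)=0$. A marginal simplification is to center the similitudes before squaring, writing $S_j(x)-\tfrac12=\tfrac15(x-\tfrac12)+\tfrac{2}{5}(j-2)$, so that the linear cross term vanishes by symmetry and one gets $V=\tfrac{1}{25}V+\tfrac13\cdot\tfrac{8}{25}$ directly, but the substance is identical.
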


From Lemma~\ref{lemma1} and Lemma~\ref{lemma2} the following corollary is true.
\begin{cor} \label{cor1}
Let $\gs \in \set{1, 2, 3}^\ast$ and $x_0 \in \mathbb R$. Then,
\begin{equation}  \int_{J_\gs} (x-x_0)^2 dP(x) =p_\gs\Big(s_\gs^2 V  +(S_\gs(\frac 12)-x_0)^2\Big).\end{equation}
\end{cor}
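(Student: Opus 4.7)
The plan is to reduce the integral over $J_\gs$ to an integral over $J=[0,1]$ by exploiting the self-similar structure of $P$, and then to evaluate the resulting integral with Lemma~\ref{lemma2}.

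First, I would apply Lemma~\ref{lemma1} at level $k:=|\gs|$ to the Borel function $x\mapsto (x-x_0)^2\,\mathbf{1}_{J_\gs}(x)$. Since $S_\tau([0,1])=J_\tau$, the composition $\mathbf{1}_{J_\gs}\circ S_\tau$ is supported where $S_\tau(x)\in J_\gs$; for $\tau\in\{1,2,3\}^k$ with $\tau\neq\gs$ the sets $J_\tau$ and $J_\gs$ meet at most at a single common endpoint, which is $P$-null because $P$ has no atoms. Consequently all summands but $\tau=\gs$ vanish, and one obtains the self-similarity identity
\[\int_{J_\gs}(x-x_0)^2\,dP(x) = p_\gs\int \bigl(S_\gs(x)-x_0\bigr)^2\,dP(x).\]

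Next, since $S_\gs$ is affine with contraction ratio $s_\gs$, I would write
\[S_\gs(x)-x_0 = s_\gs\Bigl(x-\tfrac12\Bigr) + \Bigl(S_\gs(\tfrac12)-x_0\Bigr),\]
square, and integrate term by term. By Lemma~\ref{lemma2} the mean of $X$ is $\tfrac12$, so the cross term vanishes, while the quadratic term integrates to $s_\gs^2 V$ and the constant term to $(S_\gs(\tfrac12)-x_0)^2$, yielding the desired formula. Alternatively, the substitution $y_0:=(x_0-S_\gs(0))/s_\gs$ converts the integrand to $s_\gs^2(x-y_0)^2$, and a single application of the second assertion of Lemma~\ref{lemma2} produces the same conclusion after simplifying $s_\gs(y_0-\tfrac12)=x_0-S_\gs(\tfrac12)$.

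The argument is essentially a direct computation with no serious obstacle; the only point requiring care is the first step, where one must justify collapsing the sum from Lemma~\ref{lemma1} to its single term indexed by $\gs$. This in turn rests on the (well-known) atomlessness of the Cantor distribution $P$, which guarantees that the at-most-countable overlap of distinct level-$k$ cells contributes nothing to the integral.
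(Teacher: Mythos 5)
Your argument is correct and follows exactly the route the paper intends: reduce to level $k=|\gs|$ via Lemma~\ref{lemma1}, collapse the sum to the single term $\tau=\gs$, and finish with the second assertion of Lemma~\ref{lemma2}. One small remark: the care you take about cells ``meeting at a common endpoint'' is unnecessary here, since for these maps $S_j(x)=\tfrac15 x+\tfrac25(j-1)$ the level-$k$ cells $J_\tau$, $\tau\in\{1,2,3\}^k$, are in fact pairwise disjoint (there are gaps between them), so the off-diagonal terms vanish outright without appealing to atomlessness of $P$.
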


\begin{remark}
From the above lemma it follows that the optimal set of one-mean is the expected value and the corresponding quantization error is the variance $V$ of the random variable $X$.
For $\gs \in \{1, 2, 3\}^n$, $n\geq 1$, we have $a(\gs)=E(x : X\in J_\gs)=S_\gs(\frac 12)$.
\end{remark}

\begin{prop}\label{prop00}
Let $\ga_n:=\ga_n(I)$ be the set given by Definition~\ref{defi0}. If $3^{\ell(n)}\leq n\leq 2\cdot 3^{\ell(n)}$, then the number of such sets is  ${}^{3^{\ell(n)}}C_{n-3^{\ell(n)}}$, and the corresponding distortion error is given by
\[V(P; \ga_n(I))=\frac 1{75^{\ell(n)}} \left((2\cdot 3^{\ell(n)}-n)V+ (n-3^{\ell(n)}) V(P; \ga_2)\right).\]
 If $2\cdot 3^{\ell(n)}< n< 3^{\ell(n)+1}$, then the number of such sets is $ {}^{3^{\ell(n)}}C_{n- 2\cdot 3^{\ell(n)}}$, and the corresponding distortion error is given by
\[V(P; \ga_n(I))=\frac 1{75^{\ell(n)}} \left((3^{\ell(n)+1}-n)V(P; \ga_2)+ (n-2\cdot 3^{\ell(n)})V (P; \ga_3)\right).\]
 \end{prop}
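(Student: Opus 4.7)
The plan is to decompose the distortion integral along the level-$\ell(n)$ cylinders and then use self-similarity to reduce each piece to $V$, $V(P;\ga_2)$, or $V(P;\ga_3)$. The counting part is immediate: the sets $\ga_n(I)$ are in bijection with the admissible index sets $I\ci\set{1,2,3}^{\ell(n)}$, so there are $\binom{3^{\ell(n)}}{n-3^{\ell(n)}}$ of them in Case 1 and $\binom{3^{\ell(n)}}{n-2\cdot 3^{\ell(n)}}$ in Case 2. For the distortion, I would start from the splitting
\begin{equation*}
V(P;\ga_n(I))=\sum_{\go\in\set{1,2,3}^{\ell(n)}}\int_{J_\go}\min_{a\in\ga_n(I)}(x-a)^2\,dP(x),
\end{equation*}
which is valid because the cylinders $\set{J_\go:|\go|=\ell(n)}$ are pairwise disjoint and their union contains the support $C$ of $P$.

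The key step is a geometric locality claim: for $x\in J_\go$, the minimum in the integrand above is attained at a point of $\ga_n(I)$ belonging to the ``block'' associated to $\go$, namely $a(\go)$ if $\go\nin I$ and some point of $S_\go(\ga_2)$ if $\go\in I$ in Case 1, with the analogous statement for $\ga_2$ and $\ga_3$ in Case 2. I would prove this by noting that the minimum gap between two distinct level-$\ell(n)$ cylinders is exactly $s_\go=5^{-\ell(n)}$, that every associated point of $\ga_n(I)$ lies inside $J_\go$, and that both $\max_{y\in[0,1]}\min_{a\in\ga_2}|y-a|$ and $\max_{y\in[0,1]}\min_{a\in\ga_3}|y-a|$ are strictly less than $1$ (as one checks from the explicit locations of the points of $\ga_2$ and $\ga_3$). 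Scaling by $s_\go$ then makes the distance from $x\in J_\go$ to its nearest associated point strictly smaller than the distance from $x$ to any point in any other cylinder.

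With locality in hand, Corollary~\ref{cor1} applied with $x_0=a(\go)=S_\go(\frac12)$ gives $\int_{J_\go}(x-a(\go))^2\,dP(x)=p_\go s_\go^2 V=V/75^{\ell(n)}$, and a change of variables via $S_\go$ built on Lemma~\ref{lemma1}, together with the fact that $S_\go$ scales distances by $s_\go$, yields
\begin{equation*}
\int_{J_\go}\min_{a\in S_\go(\ga_k)}(x-a)^2\,dP(x)=p_\go s_\go^2\,V(P;\ga_k)=\frac{V(P;\ga_k)}{75^{\ell(n)}}\qquad(k=2,3).
\end{equation*}
In Case 1, summing the $2\cdot 3^{\ell(n)}-n$ contributions of the first kind and the $n-3^{\ell(n)}$ contributions of the second kind yields the claimed formula; Case 2 is identical after replacing $V$ by $V(P;\ga_2)$ and $V(P;\ga_2)$ by $V(P;\ga_3)$. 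The step I expect to be the main obstacle is the geometric locality claim, since it requires checking that the Voronoi regions of $\ga_n(I)$ do not ``spill'' across the level-$\ell(n)$ cylinders; once that is nailed down, the rest is routine self-similar bookkeeping.
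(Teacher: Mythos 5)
Your proof is correct and takes the same cylinder-decomposition and self-similarity route as the paper's; the only real difference is that the paper simply writes down the cylinder-by-cylinder splitting of $V(P;\ga_n(I))$ as if it were immediate, whereas you explicitly isolate and sketch a proof of the locality claim it depends on (minimum inter-cylinder gap $5^{-\ell(n)}$ against covering radius strictly less than $1$), which is a sound and worthwhile refinement. One tiny oversight: your covering-radius check lists only $\ga_2$ and $\ga_3$, but in Case~1 the cylinders with $\go\nin I$ carry the singleton $\{a(\go)\}=\{S_\go(\frac12)\}$, whose covering radius on $[0,1]$ is $\frac12<1$, so the argument still goes through unchanged.
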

\begin{proof} If $3^{\ell(n)}\leq n\leq 2\cdot 3^{\ell(n)}$, then the subset $I$ can be chosen in ${}^{3^{\ell(n)}}C_{n-3^{\ell(n)}}$ different ways, and so, the number of such sets is given by ${}^{3^{\ell(n)}}C_{n-3^{\ell(n)}}$, and the corresponding distortion error is obtained as
\begin{align*}
&V(P; \ga_n(I))=\sum_{\gs \in \set{1, 2, 3}^{\ell(n)}\setminus I}\int_{J_\gs}(x-a(\gs))^2 dP+\sum_{\gs \in I} \int_{J_\gs}\min_{a\in S_\gs(\ga_2)}(x-a)^2 dP\\
&=\frac 1{3^{\ell(n)}}\frac 1{25^{\ell(n)}}\Big( \sum_{\gs \in \set{1, 2, 3}^{\ell(n)}\setminus I}  V+\sum_{\gs \in I}  V(P; \ga_2)\Big)=\frac 1 {75^{\ell(n)}}\Big ((2\cdot 3^{\ell(n)}-n) V+(n-3^{\ell(n)}) V(P; \ga_2)\Big).
\end{align*}
Similarly, the other part of the proposition can be derived. Thus, the proof of the proposition is complete.
\end{proof}
The following proposition is helpful to find the optimal set of two-means given in Lemma~\ref{lemmaB00}. It also shows that though the triadic Cantor distribution is uniform and symmetric about the point $\frac 12$, the set consisting of the conditional expectations of the left half $[0, \frac 12]$, and the right half $[\frac 12, 1]$ does not form an optimal set of two-means.

\begin{prop} \label{prop0001}
Let $a_1:=E(X : X\in [0, \frac 12])$, and $a_2:=E(X : X\in [\frac 12, 1])$. Then, the set $\gg:=\set{a_1, a_2}$ does not form an optimal set of two-means for $P$. \end{prop}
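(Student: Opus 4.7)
My plan is to compute $V(P; \gg)$ in closed form and then exhibit the set $\ga_2$ from Definition~\ref{defi0} as an explicit competitor with strictly smaller distortion. The fact that $P$ is symmetric about $\tfrac 12$ and that $\tfrac 12$ is the fixed point of $S_2$ makes both computations tractable: $P([0, \tfrac 12]) = \tfrac 12$, $a_2 = 1 - a_1$, and the midpoint of $a_1$ and $a_2$ is $\tfrac 12$, so $V(P; \gg) = 2\int_{[0, 1/2]} (x - a_1)^2\, dP$ by symmetry.

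To evaluate $a_1$, I would exploit the self-similarity identity $J_2 \ii [0, \tfrac 12] = S_2([0, \tfrac 12])$: applying Lemma~\ref{lemma1} to $I_L := \int_{[0, 1/2]} x\, dP$ yields a single linear equation $I_L = \tfrac{1}{30} + \tfrac{I_L}{15} + \tfrac{1}{15}$, whose solution is $I_L = \tfrac{3}{28}$, giving $a_1 = 2 I_L = \tfrac{3}{14}$. For the second moment $I_L^{(2)} := \int_{[0, 1/2]} x^2\, dP$, rather than setting up an analogous recursion it is quicker to combine the global identity $\int_0^1 x^2\, dP = V + \tfrac 14 = \tfrac{13}{36}$ with the reflection identity $\int_{1/2}^1 x^2\, dP = \tfrac 12 - 2 I_L + I_L^{(2)}$, obtaining $I_L^{(2)} = \tfrac{19}{504}$. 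Substitution then gives $V(P; \gg) = 2 I_L^{(2)} - 4 a_1 I_L + a_1^2 = \tfrac{13}{441}$.

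For the competitor $\ga_2 = \set{a(1, 21), a(22, 23, 3)}$, a direct computation using $a(\gs) = S_\gs(\tfrac 12)$ together with the weights $p_\gs$ gives $a(1, 21) = \tfrac{9}{50}$ and $a(22, 23, 3) = \tfrac{189}{250}$. Crucially their midpoint $\tfrac{117}{250}$ lies in the gap $(\tfrac{11}{25}, \tfrac{12}{25})$ between $J_{21}$ and $J_{22}$, so the Voronoi partition is cleanly $\set{J_1 \uu J_{21},\, J_{22} \uu J_{23} \uu J_3}$ without cutting any cell. Applying Corollary~\ref{cor1} to each of the five resulting pieces and summing gives $V(P; \ga_2) = \tfrac{821}{28125}$, and reducing $\tfrac{13}{441} - \tfrac{821}{28125}$ to a common denominator shows it is strictly positive, so $\gg$ is not optimal.

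The only real obstacle is the bookkeeping; the conceptual point, and the reason the proposition is worth stating, is that $\gg$ does satisfy the centroid condition of Proposition~\ref{prop0} (the midpoint is $\tfrac 12$ and each $a_i$ is the centroid of its own Voronoi region), so it is a critical point of the distortion functional, yet the splitting point $\tfrac 12$ lies strictly inside the positive-measure cell $J_2$. Moving the split into the gap between $J_{21}$ and $J_{22}$, as $\ga_2$ does, saves enough second-moment contribution from $J_2$ to beat $\gg$.
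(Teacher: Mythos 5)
Your proof is correct, and it follows the same overall strategy as the paper: compute $V(P;\gg)=\tfrac{13}{441}$ in closed form, exhibit the explicit two-point set $\ga_2 = \set{a(1,21),\, a(22,23,3)}$, check that its Voronoi boundary $\tfrac{117}{250}$ lands in the gap $(S_{21}(1),S_{22}(0))=(\tfrac{11}{25},\tfrac{12}{25})$ so that Corollary~\ref{cor1} applies cell by cell, get $V(P;\ga_2)=\tfrac{821}{28125}$, and compare. Where you differ from the paper is only in how $V(P;\gg)$ is evaluated: the paper decomposes $[0,\tfrac12]\ii C$ into the pieces $J_1, J_{21}, J_{221},\dots$ and sums two infinite series term by term, whereas you avoid series entirely by deriving a single fixed-point equation for $I_L=\int_{[0,1/2]}x\,dP$ from the self-similarity identity of Lemma~\ref{lemma1} (using that $J_2\ii[0,\tfrac12]=S_2([0,\tfrac12])$ because $\tfrac12$ is the fixed point of $S_2$), and then extract $I_L^{(2)}$ from the known total second moment $\tfrac{13}{36}$ via the reflection $x\mapsto 1-x$. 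That is a genuinely cleaner route to the same numbers, at the cost of being a bit less elementary; both give $I_L=\tfrac{3}{28}$, $I_L^{(2)}=\tfrac{19}{504}$, and hence $V(P;\gg)=2I_L^{(2)}-4a_1 I_L + a_1^2 = \tfrac{13}{441}$, which I verified. Your closing observation — that $\gg$ \emph{does} satisfy the stationarity conditions of Proposition~\ref{prop0} (the shared Voronoi boundary is $\tfrac12$, which has $P$-measure zero, and each $a_i$ is the centroid of its own cell), so the proposition is really demonstrating that a symmetric critical point can fail to be a global minimizer — is exactly the motivation the paper states in the introduction, and is worth keeping.
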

\begin{proof}
By the hypothesis, we have
\begin{align*}
a_1&=E(X : X \in [0, \frac 12])=E\Big(X : X\in  J_1\uu J_{21}\uu J_{221}\uu \cdots \Big), \te{ and } \\
a_2&=E(X : X \in [\frac 12,  1])=E\Big(X : X\in  J_3\uu J_{23}\uu J_{223}\uu \cdots \Big),
\end{align*}
yielding
\[a_1=2 \sum_{n=1}^\infty \frac 1 {3^n}\frac 12 \frac  {5^n-4}{5^n} = \frac{3}{14}, \te{ and }  a_2=2 \sum_{n=1}^\infty \frac 1 {3^n}\frac 12 \frac  {5^n+4}{5^n}=\frac{11}{14}.\]
 and the corresponding distortion error is given by
\begin{align*}
& V(P; \gg) = 2 \int_{J_1\uu J_{21}\uu J_{221}\uu J_{2221}\cdots} \Big(x-\frac 3{14}\Big)^2 dP
\end{align*}
implying \[V(P; \gg)=2 \Big( \sum _{n=1}^{\infty } \frac{1}{75^n} V+\sum _{n=1}^{\infty } \frac 1 {3^n} \Big(\frac 12 \frac  {5^n-4}{5^n}-\frac 3{14}\Big)^2 \Big)=\frac{13}{441}.\]
Let us now consider the set $\gb:=\set{a(1, 21), a(22, 23, 3)}$. Since $\frac{11}{25}=S_{21}(1)<\frac 12 (a(1, 21)+ a(22, 23, 3))=\frac{117}{250}<S_{22}(0)=\frac{12}{25}$, the distortion error due to the set $\gb$ is given by
\[V(P; \gb)=\int_{J_1\uu J_{21}}(x-a(1, 21))^2 dP+\int_{J_{22}\uu J_{23}\uu J_3}(x-a(21, 22, 23, 3))^2 dP=\frac{821}{28125}.\]
Since $V(P; \gg)=\frac{13}{441}>\frac{821}{28125}=V(P; \gb)$, the set $\gg$ does not form an optimal set of two-means yielding the proposition.
\end{proof}

\section{Optimal sets of $n$-means and the $n$th quantization errors for all $n\geq 2$} \label{sec2}

Let $\ga_n$ be the set given by Definition~\ref{defi0}. In this section, we show that for all $n\geq 2$, the sets $\ga_n$ form the optimal sets of $n$-means for the standard triadic Cantor distribution $P$. To calculate the quantization error we will frequently use the formula given by \eqref{eq0}.

\begin{lemma} \label{lemmaB00}
The set $ \ga_2:=\set{a(1, 21), a(22, 23, 3)}$ forms an optimal set of two-means, and the corresponding quantization error is given by
$V_{2}=\frac{821}{28125}.$
\end{lemma}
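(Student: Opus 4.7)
The approach is via the centroidal Voronoi characterization of Proposition~\ref{prop0}(iii): any optimal pair $\set{c_1,c_2}$ with $c_1<c_2$ satisfies $c_i=E(X\mid X\in V_i)$ for its Voronoi cell $V_i$, with separating midpoint $m:=(c_1+c_2)/2$. Conditional expectations of subsets of $C\sci[0,1]$ lie in $[0,1]$, so $m\in[0,1]$. Because $P$ is atomless and supported on $C$, the induced partition $(A,B):=(C\cap[0,m),\,C\cap(m,1])$, and with it $V(c_1,c_2)$, is locally constant on each gap of $C$; and if $m$ lies in the interior of a basic cylinder $J_\gs$ it may be pushed into either adjacent level-$(|\gs|+1)$ gap without changing $(A,B)$. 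Hence I may assume $m$ lies in some gap of $C$ or at $\frac 12\in C$, and by the $P$-symmetry about $\frac 12$ also $m\le\frac 12$.

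For each candidate gap $G\sci[0,\frac 12]$, I would compute the induced centroidal pair $(c_1,c_2)$ via Corollary~\ref{cor1} and test the consistency $\frac{c_1+c_2}{2}\in\overline G$; only then is $(c_1,c_2)$ an actual Lloyd fixed point. The decisive family is the ``central-chain'' gaps inside $J_2\supset J_{22}\supset J_{222}\supset\cdots$, which induce the partitions $A_k:=\bigcup_{j=1}^{k}J_{\gs_j}$, with $\gs_j=\underbrace{22\cdots 2}_{j-1}1$, versus their complements. Using $S_{\gs_j}(\frac 12)=\frac 12-\frac{2}{5^j}$ and summing geometric series gives the closed-form midpoint
\[\mu_k=\frac 12-\frac{2\cdot 3^{-k}(1-15^{-k})}{7(1-3^{-2k})},\]
and the consistency condition reduces to an explicit pair of inequalities satisfied only for $k\in\{1,2,3\}$ (with $k=1$ on the boundary, $\mu_1=\frac 25$). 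Together with the symmetric fixed point at $m=\frac 12$ of Proposition~\ref{prop0001}, this yields four candidate configurations in $[0,\frac 12]$.

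Computing each distortion via $V^*=E(X^2)-c_1^2 P(A)-c_2^2 P(B)$ (with $E(X^2)=V+\frac 14=\frac{13}{36}$ from Lemma~\ref{lemma2}) gives: $k=1$ yields $\frac 7{225}\approx 0.03111$; $k=2$ yields $\frac{821}{28125}\approx 0.02919$, realizing $\ga_2=(\frac 9{50},\frac{189}{250})$; $k=3$ yields $\approx 0.02941$; and the symmetric case yields $\frac{13}{441}\approx 0.02948$. The minimum is $\frac{821}{28125}$, attained at $\ga_2$, and the mirror symmetry $x\mapsto 1-x$ yields equivalent optima with identical distortions. The main obstacle is the consistency verification for \emph{non-chain} candidate gaps: the finitely many shallow ones (e.g., the level-two gaps inside $J_1$, whose induced midpoints near $0.29$ and $0.34$ fall outside $(1/25,2/25)$ and $(3/25,4/25)$) must be ruled out directly, while for deep non-chain gaps I would use an exponential-decay estimate—the induced midpoint of such a configuration deviates from its chain counterpart by $\Theta(3^{-\ell})$ while the candidate gap has width $5^{-\ell}$, so consistency fails beyond a finite depth. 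Together this identifies $\ga_2$ as an optimal two-means set with $V_2=\frac{821}{28125}$.
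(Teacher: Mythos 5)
Your proposal takes a genuinely different route than the paper's proof. The paper first records the upper bound $V_2\le\frac{821}{28125}$ via the candidate set $\ga_2$, and then marches the Voronoi boundary $\frac 12(a+b)$ through successive subintervals of the Cantor construction, producing at each step a lower bound for the distortion that already exceeds $\frac{821}{28125}$, hence a contradiction; this forces the boundary into $(S_{21}(1),S_{22}(0))$ and identifies $\ga_2$. You instead classify Lloyd fixed points directly, parametrizing candidate partitions by gaps of $C$ and checking the self-consistency $\frac{c_1+c_2}{2}\in\overline G$, then comparing distortions among the consistent configurations. Your arithmetic for the chain gaps is correct: $\mu_1=\frac25$, $\mu_2=\frac{117}{250}$, the $k=2$ centroids are $\bigl(\frac9{50},\frac{189}{250}\bigr)$, and the distortions $\frac{7}{225}$, $\frac{821}{28125}$, $\approx 0.02941$, $\frac{13}{441}$ all check out, with the minimum at $k=2$. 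The fixed-point framing is conceptually attractive and would adapt readily to similar self-similar distributions; the paper's method has the structural advantage that each case closes automatically against the already-known upper bound.

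However, two genuine gaps remain. First, the reduction to ``$m$ lies in a gap of $C$ or at $\frac12$'' is not correctly justified: the claim that a midpoint in the interior of a cylinder $J_\gs$ ``may be pushed into either adjacent level-$(|\gs|+1)$ gap without changing $(A,B)$'' is false, because moving $m$ across sub-cylinders of $J_\gs$ does change the induced partition. What one needs is that the distortion $V(m)$ is continuous (by atomlessness), is constant on gap closures, and gaps are dense in $[0,1]$, so that $V_2=\inf_{\text{gaps}}V(\cdot)$; one must then show the infimum is attained on a gap rather than merely approached through $C$, with $\frac12$ treated as the one exceptional fixed point in $C$. Second, and more seriously, the exclusion of non-chain gaps is only sketched: you explicitly rule out only the two level-two gaps inside $J_1$, and the exponential-decay heuristic (induced midpoint deviating by $\Theta(3^{-\ell})$ against gap width $5^{-\ell}$) is asserted, not proved, and in any case does not dispose of the boundedly many shallow non-chain gaps at each finite depth that lie outside its asymptotic regime. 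To make your approach watertight you would need either to carry out that finite verification together with a proved tail estimate, or to establish a monotonicity statement (that $\frac{c_1(m)+c_2(m)}{2}-m$ does not change sign between consecutive chain gaps) guaranteeing that the four configurations you list exhaust the Lloyd fixed points in $[0,\frac12]$. Until one of those is supplied the argument is incomplete.
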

\begin{proof}
Consider the set $\gb$ of two points given by $\gb:=\set{a(1, 21), a(22, 23, 3)}$. The distortion error due to the set $\gb$ is given by
\[\int \min_{b\in \gb}(x-b)^2 dP=\int_{J_1\uu J_{21}}(x-a(1, 21))^2 dP+\int_{J_{22}\uu J_{23}\uu J_3}(x-a(22, 23, 3))^2 dP=\frac{821}{28125}.\]
Since $V_2$ is the quantization error for two-means, we have $V_2\leq \frac{821}{28125}$. Let $\ga:=\set{a, b}$ be an optimal set of two-means. Since the optimal quantizers are the expected values of their own Voronoi regions, we have $0<a<b<1$. By Proposition~\ref{prop0001}, we see that the boundary of the Voronoi regions can not pass through the midpoint $\frac 12$. Thus, without any loss of generality we assume that the boundary of the Voronoi regions, i.e., the point $\frac 12 (a+b)$ lies to the left of the midpoint $\frac 12$, i.e., $\frac 12(a+b)<\frac 12$. We now show that the Voronoi region of $a$ contains points from $J_2$. For the sake of contradiction, assume that the Voronoi region of $a$ does not contain any point from $J_2$. Then,
\[V_2\geq \int_{J_2\uu J_3}(x-a(2, 3))^2 dP=\frac{4}{135}>V_2,\]
which leads to a contradiction. Hence, we can assume that the Voronoi region of $a$ contains points from $J_2$, i.e.,  $S_2(0)=\frac 25<\frac 12(a+b)<\frac 12$. Assume that $S_2(0)=\frac 25<\frac 12(a+b)<S_{2112}(0)$. Then, writing $A_1:=J_{2112}\uu J_{2113}\uu J_{212}\uu J_{213}\uu J_{22}\uu J_{23}\uu J_3$, we have
\[V_2\geq \int_{J_1}(x-a(1))^2 dP+\int_{A_1}(x-a(2112, 2113, 212, 213, 22, 23, 3))^2 dP=\frac{452551888}{15092578125}>V_2,\]
which leads to a contradiction. Hence, $S_{2112}(0)<\frac 12(a+b)<\frac 12$. Assume that $S_{2112}(0)<\frac 12(a+b)<S_{2113}(0)$. Then, writing $A_2:=J_{2113}\uu J_{212}\uu J_{213}\uu J_{22}\uu J_{23}\uu J_3$, we have
\[V_2\geq \int_{J_1\uu J_{2111}}(x-a(1, 2111))^2 dP+\int_{A_2}(x-a(2113, 212, 213, 22, 23, 3))^2 dP=\frac{775266524}{25913671875}>V_2,\]
which gives a contradiction. Hence, $S_{2113}(0)<\frac 12(a+b)<\frac 12$. Assume that $S_{2113}(0)<\frac 12(a+b)<S_{212}(0)$.
Then, writing $A_3:=J_{212}\uu J_{213}\uu J_{22}\uu J_{23}\uu J_3$, we have
\[V_2\geq \int_{J_1\uu J_{2111}\uu J_{2112}}(x-a(1, 2111, 2112))^2 dP+\int_{A_3}(x-a(212, 213, 22, 23, 3))^2 dP=\frac{4180404512}{140389453125}\]
yielding $V_2>V_2$, which is a contradiction. Hence,  $S_{212}(0)<\frac 12(a+b)<\frac 12$.
Assume that $S_{212}(0)<\frac 12(a+b)<S_{2122}(0)$. Then, writing $A_4:=J_{2122}\uu J_{2123}\uu J_{213}\uu J_{22}\uu J_{23}\uu J_3$, we have
\[V_2\geq \int_{J_1\uu J_{211}}(x-a(1, 211))^2 dP+\int_{A_4}(x-a(2122, 2123, 213, 22, 23, 3))^2 dP=\frac{210852992}{7119140625}>V_2,\]
which is a contradiction. Hence, $S_{2122}(0)<\frac 12(a+b)<\frac 12$. Assume that $S_{2122}(0)<\frac 12(a+b)<S_{2123}(0)$.
Then, writing $A_5:=J_{2123}\uu J_{213}\uu J_{22}\uu J_{23}\uu J_3$, we have
\[V_2\geq \int_{J_1\uu J_{211}\uu J_{2121}}(x-a(1, 211, 2121))^2 dP+\int_{A_5}(x-a(2123, 213, 22, 23, 3))^2 dP=\frac{12733641776}{432558984375}\]
yielding $V_2>V_2$, which is a contradiction. Hence, $S_{2123}(0)<\frac 12(a+b)<\frac 12$. Assume that $S_{2123} (0)<\frac 12(a+b)<S_{213}(0)$.
Writing $A_6=J_1\uu J_{211}\uu J_{2121}\uu J_{2122}$, and $A_7=J_{213}\uu J_{22}\uu J_{23}\uu J_3$, we have
\[V_2\geq \int_{A_6}(x-a(1, 211, 2121, 2122))^2 dP+\int_{A_7}(x-a(213, 22, 23, 3))^2 dP=\frac{332546}{11390625}>V_2,\]
which give a contradiction. Hence, $S_{213}(0)<\frac 12(a+b)<\frac 12$. Assume that $S_{213}(0)<\frac 12(a+b)<S_{21313}(0)$. Then, writing
$A_8=J_1\uu J_{211}\uu J_{212}$, $a_8=E(X : X \in A_8)$, $A_9=J_{21313}\uu J_{2132}\uu J_{2133}\uu J_{22}\uu J_{23}\uu J_3$, and $a_9=E(X : X\in A_9)$, we have
\[V_2\geq \int_{A_8}(x-a_8)^2 dP+\int_{A_9}(x-a_9)^2 dP=\frac{489226058779}{16680146484375}>V_2,\]
which leads to a contradiction. Hence, $S_{21313}(0)<\frac 12(a+b)<\frac 12$. Assume that $S_{21313}(0)<\frac 12(a+b)<S_{2132}(0)$. Then, writing
$A_{10}=J_1\uu J_{211}\uu J_{212}\uu J_{21311}\uu J_{21312}$, $a_{10}=E(X : X \in A_{10})$, $A_{11}=J_{2132}\uu J_{2133}\uu J_{22}\uu J_{23}\uu J_3$, and $a_{11}=E(X : X\in A_{11})$, we have
\[V_2\geq \int_{A_{10}}(x-a_{10})^2 dP+\int_{A_{11}}(x-a_{11})^2 dP=\frac{2996202402374}{101383681640625}>V_2,\]
which leads to a contradiction. Hence, $S_{2132}(0)<\frac 12(a+b)<\frac 12$.

Assume that $S_{2132}(0)<\frac 12(a+b)<S_{2133}(0)$. Partition the interval
$[S_{2132}(0), S_{2133}(0)]$ into the following three subintervals:
\[[S_{21321}(0), S_{21322}(0)], \, [S_{21322}(0), S_{21323}(0)], \, [S_{21323}(0), S_{2133}(0)].\]
Then, $\frac 12(a+b)$ belongs to one of the above three subintervals. First, assume that $\frac 12(a+b) \in [S_{21321}(0), S_{21322}(0)]$. Then, as before we can show that the distortion error is larger than $0.0291911\geq V_2$, which leads to a contradiction. To show that for $\frac 12(a+b) \in [S_{21321}(0), S_{21322}(0)]$, the distortion error is larger, we may need to partition the subinterval $[S_{21321}(0), S_{21322}(0)]$ into the following three sub-subintervals:
\[[S_{213211}(0), S_{213212}(0)], \, [S_{213212}(0), S_{213213}(0)], [S_{213213}(0), S_{21322}(0)],\] and then we can separately consider the cases as follows:
\[\frac 12(a+b)\in [S_{213211}(0), S_{213212}(0)], \, [S_{213212}(0), S_{213213}(0)], \te{ or }  [S_{213213}(0), S_{21322}(0)].\]
If needed, we can further partition the above sub-subintervals to check that the distortion is larger. Similarly, we can show that if $\frac 12(a+b)$ belongs to either $[S_{21322}(0), S_{21323}(0)]$, or $[S_{21323}(0), S_{2133}(0)],$ then the contradiction arises. Therefore, $S_{21313}(0)<\frac 12(a+b)<S_{2132}(0)$ can not happen. Using the similar arguments, we can show that neither $S_{2132}(0)<\frac 12(a+b)<S_{2133}(0)$, nor $S_{2133}(0)<\frac 12(a+b)<S_{2133}(1)$ can  happen. Hence, we can assume that $S_{21}(1)<\frac 12(a+b)<\frac 12$.

Assume that $S_{22222}(0)<\frac 12(a+b)<\frac 12$. Then, writing $A_{12}=J_1\uu J_{21}\uu J_{221}\uu J_{2221}\uu J_{22221}$, and $a_{12}=E(X : X \in A_{12})$, and by  Proposition~\ref{prop0001}, noting the fact that  $\int_{[\frac 12, 1]}(x-E(X : X\in [\frac 12, 1]))^2 dP=\frac {13}{882}$, we have
\[V_2\geq \int_{A_{12}}(x-a_{12})^2 dP+\frac {13}{882}=\frac{61346429393}{2093027343750}>V_2,\]
which gives a contradiction. Hence, $S_{21}(1)<\frac 12(a+b)<S_{22222}(0)$. Assume that $S_{222212}(0)<\frac 12(a+b)<S_{22222}(0)$. Then,
writing $A_{13}=J_1\uu J_{21}\uu J_{221}\uu J_{2221}\uu J_{222211}$,   $a_{13}=E(X : X \in A_{13})$, $A_{14}=J_{22222}\uu J_{22223}\uu J_{2223}\uu J_{223}\uu J_{23}\uu J_3$, and $a_{14}=E(X : X \in A_{14})$, we have
\[V_2\geq \int_{A_{13}}(x-a_{13})^2 dP+\int_{A_{14}}(x-a_{14})^2 dP=\frac{1031786636229481}{35273384033203125}>V_2,\]
which is a contradiction. Hence, $S_{21}(1)<\frac 12(a+b)<S_{222212}(0)$. Proceeding in this way, and using the similar technique of partitioning the intervals into subintervals, as mentioned in the previous paragraph, we can show that $S_{22}(0)<\frac 12(a+b)<S_{222212}$ cannot happen.
Hence, $S_{21}(1)<\frac 12(a+b)<S_{22}(0)$. Thus, the set $ \ga_2:=\set{a(1, 21), a(22, 23, 3)}$ forms an optimal set of two-means, and the corresponding quantization error is given by
$V_{2}=\frac{821}{28125}.$
Hence, the proof of the lemma is complete.
\end{proof}

\begin{cor} \label{cor0}
Let $\ga_2$ be an optimal set of two-means. Then, for $1\leq j\leq 3$, we have $\int_{J_j}\min_{a\in S_j(\ga_2)}(x-a)^2 dP=\frac 1{75} V_2$.
\end{cor}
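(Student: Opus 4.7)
The plan is to exploit the self-similarity of $P$ expressed in Lemma~\ref{lemma1}, which in particular says that $P$ restricted to $J_j$ is $\frac{1}{3}$ times the pushforward $P\circ S_j^{-1}$. Everything then reduces to a change of variables together with the fact that each $S_j$ has contraction ratio $\frac{1}{5}$.

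First I would rewrite the integral by the substitution $x=S_j(y)$. Applying Lemma~\ref{lemma1} with $r=1$ and the indicator function of $J_j$, one obtains
\[
\int_{J_j}\min_{a\in S_j(\ga_2)}(x-a)^2\,dP(x)=\frac{1}{3}\int\min_{a\in S_j(\ga_2)}\bigl(S_j(y)-a\bigr)^2\,dP(y).
\]
Next I would use that every $a\in S_j(\ga_2)$ is of the form $a=S_j(b)$ for a unique $b\in\ga_2$, and that $S_j$ is a similarity of ratio $\frac{1}{5}$, so $(S_j(y)-S_j(b))^2=\frac{1}{25}(y-b)^2$. Hence
\[
\min_{a\in S_j(\ga_2)}\bigl(S_j(y)-a\bigr)^2=\frac{1}{25}\min_{b\in\ga_2}(y-b)^2.
\]

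Substituting this back and invoking Lemma~\ref{lemmaB00}, which identifies the integral $\int\min_{b\in\ga_2}(y-b)^2\,dP(y)$ with the optimal two-means error $V_2$, yields
\[
\int_{J_j}\min_{a\in S_j(\ga_2)}(x-a)^2\,dP(x)=\frac{1}{3}\cdot\frac{1}{25}\cdot V_2=\frac{1}{75}V_2,
\]
which is exactly the claimed identity. There is no real obstacle here; the only thing to be careful about is the bookkeeping, namely that the minimum over $S_j(\ga_2)$ after the change of variables genuinely becomes a minimum over $\ga_2$, which is immediate from the bijection $b\mapsto S_j(b)$.
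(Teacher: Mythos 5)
Your proposal is correct and follows essentially the same route as the paper: the paper also uses the self-similarity $P\restriction_{J_j}=\tfrac{1}{3}\,P\circ S_j^{-1}$, changes variables via $S_j$, pulls out the factor $\tfrac{1}{25}$ from the contraction ratio, and identifies the remaining integral with $V_2$.
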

\begin{proof}
We have \begin{align*} &\int_{J_j}\min_{a\in S_j(\ga_2)}(x-a)^2dP=\frac 13 \int_{J_j}\min_{a\in S_j(\ga_2)}(x-a)^2d(P\circ S_j^{-1})=\frac 13 \int\min_{a\in S_j(\ga_2) }(S_j(x)-a)^2dP\\
&=\frac 13 \int\min_{a\in \ga_2 }(S_j(x)-S_j(a))^2dP=\frac 1{75}  \int\min_{a\in \ga_2 }(x-a)^2dP=\frac 1{75} V_2.
\end{align*}
Thus, the proof of the corollary is complete.
\end{proof}

\begin{lemma} \label{lemmaB1}
The set $\ga_3:=\set{S_1(\frac 12), S_2(\frac 12), S_3(\frac 12)}$ forms an optimal set of three-means, and the corresponding quantization error is given by $V_{3}=\frac{1}{225}$.
\end{lemma}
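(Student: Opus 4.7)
The plan is to establish the lemma in two phases: an upper bound on $V_3$ obtained by direct evaluation of the distortion of the set $\ga_3$ in the statement, and a matching lower bound via a two-sided analogue of the squeeze technique used in Lemma~\ref{lemmaB00}.

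For the upper bound I would proceed as follows. The three points $S_1(\frac 12)=\frac{1}{10}$, $S_2(\frac 13)=\frac{7}{15}$, and $S_3(\frac 12)=\frac{9}{10}$ lie in $J_1$, $J_2$, and $J_3$ respectively, and the two Voronoi bisectors $\frac 12(S_1(\frac 12)+S_2(\frac 13))$ and $\frac 12(S_2(\frac 13)+S_3(\frac 12))$ fall inside the $P$-null gaps $(\frac 15,\frac 25)$ and $(\frac 35,\frac 45)$. Hence the Voronoi region of each quantizer, intersected with $\te{supp}(P)$, equals the corresponding $J_j$, so the distortion splits as
\[
V(P;\ga_3)=\int_{J_1}\Big(x-\tfrac{1}{10}\Big)^2\,dP+\int_{J_2}\Big(x-\tfrac{7}{15}\Big)^2\,dP+\int_{J_3}\Big(x-\tfrac{9}{10}\Big)^2\,dP,
\]
and Corollary~\ref{cor1} applied to each piece (with $x_0=S_\gs(\frac 12)$ for the two outer terms and $x_0=\frac{7}{15}$ for the middle term) reduces the computation to elementary arithmetic in $V=\frac 19$, yielding the claimed bound $V_3\le\frac{1}{225}$.

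For the lower bound, let $\ga=\set{a_1,a_2,a_3}$ with $a_1<a_2<a_3$ be any optimal set of three-means; by Proposition~\ref{prop0} each $a_i$ is the centroid of its Voronoi region. Writing $m_L=\frac 12(a_1+a_2)$ and $m_R=\frac 12(a_2+a_3)$ for the two bisectors, I would mimic the stepwise squeeze of Lemma~\ref{lemmaB00}: for each candidate sub-cylinder $J_\gs$ (with nontrivial $P$-mass) in which $m_L$ or $m_R$ could land, use Corollary~\ref{cor1} together with Lemma~\ref{lemma2} to bound the resulting distortion below by a quantity strictly exceeding $\frac{1}{225}$, contradicting the optimality of $\ga$. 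Iterating these exclusions forces $m_L\in(\frac 15,\frac 25)$ and $m_R\in(\frac 35,\frac 45)$. Once both bisectors lie in the gaps, the Voronoi regions coincide with $J_1,J_2,J_3$ on the support, and the centroid condition identifies the three quantizers as in $\ga_3$.

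The main obstacle is the breadth of the case analysis: two Voronoi boundaries must be squeezed into their respective gaps essentially independently, and by Proposition~\ref{prop0001} one cannot appeal to a symmetry reduction across $\frac 12$, so sub-cylinders on both sides of the midpoint of $J_2$ have to be ruled out one at a time. Each individual exclusion is a short arithmetic check in the spirit of Lemma~\ref{lemmaB00}, but enumerating the relevant sub-cylinders of $J_1$, $J_2$, and $J_3$ and keeping the bookkeeping consistent is the delicate part of the argument.
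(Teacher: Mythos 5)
There is a genuine gap in your upper bound, and it stems from the lemma statement itself. The $S_2(\frac 13)$ in the statement is a typo for $S_2(\frac 12)$: the paper's own proof works with $\gb=\set{S_j(\frac 12): j=1,2,3}$, and $S_2(\frac12)=\frac12$ is the centroid of $J_2$, as Proposition~\ref{prop0} forces for any optimal quantizer whose Voronoi region meets only $J_2$. You instead take the middle point to be $S_2(\frac13)=\frac{7}{15}$ and assert that the computation via Corollary~\ref{cor1} ``yields the claimed bound $V_3\le\frac{1}{225}$,'' but it does not: $\int_{J_2}\bigl(x-\frac{7}{15}\bigr)^2\,dP=\frac13\bigl(\frac1{225}+(\frac12-\frac{7}{15})^2\bigr)=\frac1{540}$, so your set's distortion is $\frac{2}{675}+\frac1{540}=\frac{13}{2700}>\frac1{225}$. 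You therefore never establish the upper bound; carrying out the arithmetic you sketched, or simply checking the centroid condition you cite from Proposition~\ref{prop0}, would have flagged the typo.

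Your lower-bound plan is also not the paper's route, and it is substantially heavier than what the problem requires. You propose a two-sided sub-cylinder squeeze for both bisectors $m_L$ and $m_R$, modeled on Lemma~\ref{lemmaB00}. That iteration was needed there because the optimal two-means boundary actually sits \emph{inside} $J_2$; in the three-means case the boundaries land in the $P$-null gaps, and the paper derives this directly without any squeeze. It first shows $\ga\ii J_j\neq\es$ for each $j$ by a single distortion inequality (e.g.\ $\ga\ii J_1=\es$ gives $V_3\ge\int_{J_1}(x-\frac15)^2\,dP=\frac{13}{2700}>V_3$, and the $J_2$ case splits only into $a_2<\frac25$ or $a_2>\frac35$), and then notes that $a_1\le\frac15$ and $a_2\le\frac35$ force $\frac12(a_1+a_2)\le\frac25$, so no Voronoi region can straddle the gaps; the centroid condition then pins each $a_j$ to $S_j(\frac12)$. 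Besides being longer, your plan also hides a real difficulty: bounding the distortion when $m_L$ lands in a sub-cylinder of $J_2$ depends on where $m_R$ is, so the two exclusions are not independent as your sketch assumes. The paper's structural argument sidesteps that coupling entirely.
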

\begin{proof}
Let $\gb$ be a set of three points such that $\gb:=\set{S_j(\frac 12) : j=1, 2, 3}$. Then,
\[\int\min_{b\in \gb} (x-b)^2 dP=\sum_{j=1}^3 \int_{J_j} (x-S_j(\frac 1 2))^2 dP=\frac{1}{225}.\]
Since $V_3$ is the quantization error for three-means, we have $V_3\leq\frac{1}{225}$. Let $\ga:=\set{a_1, a_2, a_3}$ be an optimal set of three-means. Since the elements in an optimal set are the centroids of their own Voronoi regions, without any loss of generality, we can assume that $0<a_1<a_2<a_3<1$. We now prove that $\ga\ii J_j\neq \es$ for all $1\leq j\leq 3$. Suppose that $\ga\ii J_1=\es$. Then,
\[V_3\geq \int_{J_1}(x-\frac 15)^2 dP=\frac{13}{2700}>V_3,\]
which is a contradiction. So, we can assume that $\ga\ii J_1\neq \es$. Similarly, $\ga\ii J_3\neq \es$. We now that that $\ga\ii J_2\neq \es$.
Suppose that $\ga\ii J_2=\es$. Then, either $a_2<\frac 25$, or $a_2>\frac 35$. Assume that $a_2<\frac 25$. Then, as $\frac 12(\frac 25+\frac 45)=\frac 35$, we have
\[V_3\geq \int_{J_2}(x-\frac 25)^2 dP+\int_{J_3}(x-S_3(\frac 12))^2dP=\frac{17}{2700}>V_3,\]
which is a contradiction. Similarly, we can show that if $\frac 35<a_2$, then a contradiction arises. Thus, we can assume that $a_2\in J_2$, i.e., $\ga\ii J_2\neq \es$. Now, if the Voronoi region of $a_1$ contains points from $J_2$, we have $\frac 12(a_1+a_2)>\frac 25$ implying $a_2>\frac 45-a_1\geq \frac 45-\frac 15=\frac 35$, which is a contradiction as $a_2\in J_2$. Thus, the Voronoi region of $a_1$ does not contain any point from $J_2$. Similarly, the Voronoi region of $a_2$ does not contain any point from $J_1$ and $J_3$. Likewise, the Voronoi region of $a_3$ does not contain any point from $J_2$. Since the optimal quantizers are the centroids of their own Voronoi regions, we have $a_1=S_1(\frac 12)$, $a_2=S_2(\frac 12)$, and $a_3=S_3(\frac 12)$, and the corresponding quantization error is given by $V_3=\frac{1}{225}$. Thus, the proof of the lemma is complete.
\end{proof}

\begin{remark}\label{remark0}
By Lemma~\ref{lemmaB1}, we see that the set $\set{S_j(\frac 12) : 1\leq j\leq 3}$ forms an optimal set of three-means. Similarly, we can show that the sets
 $\set{S_1(\frac 12), S_2(\frac 12)}\uu S_3(\ga_2) $, $\set{S_1(\frac 12 )}\uu S_2(\ga_2)\uu S_3(\ga_2)$, $S_1(\ga_2)\uu S_2(\ga_2)\uu S_3(\ga_2)$,  $S_{1}(\ga_3)\uu S_2(\ga_2)\uu S_3(\ga_2)$,  and $S_1(\ga_3)\uu S_2(\ga_3)\uu S_3(\ga_2)$  form optimal sets of $n$-means for $n=4, 5, 6, 7, 8$, respectively. Due to technicality of the proofs, we do not show them in the paper.

\end{remark}

\begin{prop}\label{propB1}
Let $\ga_n$ be an optimal set of $n$-means for any $n\geq 3$. Then, $\ga_n\ii J_j\neq \es$ for all $1\leq j\leq 3$, and $\ga_n$ does not contain any point from the open intervals $(\frac 15, \frac 25)$ and $(\frac 35, \frac 45)$. Moreover, the Voronoi region of any point in $\ga_n\ii J_j$ does not contain any point from $J_i$, where $1\leq i\neq j\leq 3$.
\end{prop}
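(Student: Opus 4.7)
The plan is to argue each of the three claims by contradiction, in the order first, third, second, since the third is a direct consequence of the first, and the second requires both. The main upper bound I rely on is $V_n \le V_3 = \tfrac{1}{225}$, valid for all $n \ge 3$ by Lemma~\ref{lemmaB1} and monotonicity of $V_n$; in one delicate subcase I also use the sharper bound $V_n \le V_7$ with value $\tfrac{1767}{2109375}<\tfrac{19}{18900}$, obtained from Proposition~\ref{prop00}.

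For the first claim with $j=1$: if $\ga_n \cap J_1 = \es$, every $a \in \ga_n$ lies in $[\tfrac{1}{5},1]$, so Corollary~\ref{cor1} gives
\[
V_n \;\ge\; \int_{J_1}\Big(\tfrac 15 - x\Big)^2 dP \;=\; \tfrac{1}{3}\Big(\tfrac{V}{25}+\tfrac{1}{100}\Big) \;=\; \tfrac{13}{2700} \;>\; \tfrac{12}{2700} \;=\; V_3 \;\ge\; V_n,
\]
a contradiction; the case $j=3$ follows by the symmetry $x\mapsto 1-x$. For $j=2$ the analogous crude bound yields only $\int_{J_2}\min((x-\tfrac 25)^2,(\tfrac 35-x)^2)dP=\tfrac{19}{18900}<V_3$, so I split on $n$: for $n\in\{3,\dots,6\}$ I invoke the explicit optimal sets from Lemma~\ref{lemmaB1} and Remark~\ref{remark0} (each visibly intersecting $J_2$), while for $n\ge 7$ the bound $V_n\le V_7<\tfrac{19}{18900}$ completes the contradiction.

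For the third claim, suppose $a\in\ga_n\cap J_j$ and $P(M(a|\ga_n)\cap J_i)>0$ for some $i\ne j$. By the first claim pick $b\in\ga_n\cap J_i$. Since $d(J_j,J_i)\ge\tfrac{1}{5}=\mathrm{diam}(J_i)$, one verifies $|x-b|<|x-a|$ for every $x\in J_i$ except possibly the single endpoint of $J_i$ facing $J_j$; as $P$ is non-atomic this exceptional set is $P$-null, contradicting $x\in M(a|\ga_n)$. For the second claim, suppose $a\in\ga_n\cap(\tfrac 15,\tfrac 25)$. The same distance estimate rules out mass from $J_3$ in $M(a|\ga_n)$, so $M(a|\ga_n)\cap\mathrm{supp}(P)\ci J_1\cup J_2$; because the conditional mean on any subset of $J_1$ is $\le\tfrac 15$ and on any subset of $J_2$ is $\ge\tfrac 25$, Proposition~\ref{prop0}(iii) forces $m_1:=P(M(a|\ga_n)\cap J_1)>0$ and $m_2:=P(M(a|\ga_n)\cap J_2)>0$ with respective conditional means $\mu_1\le\tfrac 15<\mu_2$. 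Letting $b=\max(\ga_n\cap J_1)$, $c=\min(\ga_n\cap J_2)$, and $\tilde a=E(X\mid X\in M(a|\ga_n)\cap J_2)\in J_2$, I replace $a$ by $\tilde a$: the inter-cluster variance gain $\tfrac{m_1m_2}{m_1+m_2}(\mu_2-\mu_1)^2\ge\tfrac{m_1m_2}{25(m_1+m_2)}$ strictly exceeds the boundary-reassignment penalty on the sliver $((a+b)/2,(\tilde a+b)/2)\cap J_1$, yielding a net cost decrease and contradicting optimality. The gap $(\tfrac 35,\tfrac 45)$ is handled symmetrically.

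The main technical obstacle is the $j=2$ case of the first claim for small $n$, since the crude lower bound fails to beat $V_n$; this is resolved by invoking the explicit optimal sets in Remark~\ref{remark0} for $n\le 6$ and the tighter bound $V_n\le V_7$ for $n\ge 7$. A secondary subtlety lies in rigorously bookkeeping the cost change in the perturbation argument for the second claim, which requires tracking how the Voronoi boundaries shift when $a$ is replaced by $\tilde a$; the key quantitative fact is $\mu_2-\mu_1\ge\tfrac 15$, ensuring the variance gain dominates the reassignment loss.
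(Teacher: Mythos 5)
Your argument for the first claim (each $J_j$ meets $\ga_n$) is essentially the paper's, with a small optimization: you use the upper bound $V_n\le V_7<\frac{19}{18900}$ for $n\ge 7$, whereas the paper uses $V_n\le V_9$ for $n\ge 9$ and invokes Remark~2.7 for $3\le n\le 8$. Both approaches lean on the unproven assertions in Remark~2.7 for the remaining small $n$, and both use the same lower-bound integrand on $J_2$ (the sum over $J_{2^{m-1}1}$ and its mirror, which is exactly $\int_{J_2}\min((x-\frac25)^2,(\frac35-x)^2)\,dP$). Your geometric proof of the third claim, using $d(J_j,J_i)\ge\frac15=\mathrm{diam}(J_i)$, is a clean alternative to the paper's midpoint inequality, and your reordering (proving the third claim before the second, using only the first) is sound.

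The problem is your argument for the second claim (no quantizer in the gaps $(\frac15,\frac25)$, $(\frac35,\frac45)$). The paper handles this by a case split on the location of $a_{j+1}$ and concrete distortion lower bounds (the $\int_{J_{2^{m-1}1}}(x-\frac12)^2\,dP$ and $\int_{J_{13}}(x-\frac1{10})^2\,dP$ computations) that exceed $V_n\le V_9$. You instead propose a perturbation: replace $a$ by $\tilde a=\mu_2$ and reassign $M(a|\ga_n)\cap J_1$ to $b$. The exact cost change of this move is
\[
m_1(\mu_1-b)^2-\frac{m_1m_2}{m_1+m_2}(\mu_2-\mu_1)^2,
\]
so you need $\frac{m_2}{m_1+m_2}(\mu_2-\mu_1)^2>(\mu_1-b)^2$. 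But the bounds you invoke ($\mu_2-\mu_1\ge\frac15$ and, implicitly, $\mu_1-b\le\frac15$) give both sides of order $\frac{1}{25}$; since $\frac{m_2}{m_1+m_2}<1$, the inequality does not follow. Concretely, a configuration with $b$ near $\frac1{10}$, $\mu_1$ near $\frac15$, and $m_2/m_1$ small (which is exactly the regime that the centroid condition $a\in(\frac15,\frac25)$ pushes toward when $a$ is close to $\frac15$) makes the right side $\approx\frac1{100}$ while the left side can be much smaller. You flag the bookkeeping as the "main subtlety" and assert the inequality as a "key quantitative fact," but as written it is not established and can fail; additional constraints (from $a,b$ being centroids and from the specific self-similar structure of $C$) would have to be marshaled, and it is not clear they suffice. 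Unless this step is repaired, the second claim --- and hence the proposition --- is not proved by your argument for any $n$ beyond those covered by Remark~2.7.
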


\begin{proof}
Due to Lemma~\ref{lemmaB1}, and Remark~\ref{remark0}, the proposition is true for $3\leq n\leq 8$. Let us now prove that the proposition is true for $n\geq 9$. Let $\ga_n:=\set{a_1, a_2, \cdots, a_n}$ be an optimal set of $n$-means for $n\geq 9$. Since the points in an optimal set are the centroids of their own Voronoi regions, without any loss of generality, we can assume that $0<a_1<a_2<\cdots<a_n<1$. Consider the set of nine elements $\gb:=\set{S_\gs(\frac 12) : \gs \in\set{1, 2, 3}^2}$. Then,
\[\int\min_{a\in \gb}(x-a)^2 dP=\sum_{\gs\in \set{1,2,3}^2}\int_{J_\gs}(x-a(\gs))^2 dP=\frac 1{25^2} V =\frac{1}{5625}.\]
Since $V_n$ is the quantization error for $n$-means for $n\geq 9$, we have $V_n\leq V_9\leq \frac{1}{5625}$. Suppose that $\frac 15< a_1$. Then,
\[V_n\geq \int_{J_1}(x-\frac 15)^2 dP=\frac{13}{2700}>V_n,\]
which is a contradiction. So, we an assume that $a_1\leq \frac 15$. Similarly, $\frac 45\leq a_n$. Thus, $\ga_n\ii J_1\neq \es$, and $\ga_n\ii J_3\neq \es$. We now show that $\ga_n\ii J_2\neq \es$. For the sake of contradiction, assume that $\ga_n\ii J_2=\es$. Let $a_j:=\max\set{a_i : a_i<\frac 25 \te{ for } 1\leq i\leq n-1}$. Then, $a_j<\frac 25$. As $\ga_n\ii J_2=\es$, we have $\frac 35<a_{j+1}$.
Thus, using the symmetry and the formula given by \eqref{eq0}, we have
\begin{align*}
&V_n\geq 2 \sum_{n=2}^\infty \int_{J_{2^{n-1}1}}(x-\frac 25)^2 dP =2 \sum_{n=2}^\infty  \frac 1 {3^n}\Big(\frac 1{25^n} V+(S_{2^{n-1}1}(\frac 12)-\frac25)^2\Big)\\
&=2 \sum_{n=2}^\infty  \left(\frac 1{75^n} V+\frac 1 {3^n} \Big((S_{2^{n-1}1}(\frac 12))^2-\frac 45 S_{2^{n-1}1}(\frac 12)+\frac 4{25}\Big)\right)\\
&=2 \sum_{n=2}^\infty  \frac 1 {75^n}V +2 \sum_{n=2}^\infty  \frac 1 {3^n}\Big (\frac {5^n-4}{5^{n-1} 10}\Big)^2-\frac 8 5 \sum_{n=2}^\infty  \frac 1 {3^n}  \frac {5^n-4}{5^{n-1} 10}+\frac 8{25} \sum_{n=2}^\infty \frac 1 {3^n}\\
&=\frac{19}{18900}>V_n,
\end{align*}
which gives a contradiction. Hence, we can conclude that $\ga_n\ii J_2\neq \es$.
Next, suppose that $\ga_n$ contains a point from the open interval $(\frac 15, \frac 25)$. Let $a_j:=\max\set{a_i : a_i<\frac 15 \te{ for } 1\leq i\leq n-2}$. Then, $a_{j+1}\in (\frac 15, \frac 25)$, and $a_{j+2}\in J_2$. The following cases can arise:

Case~1. $\frac 15 <a_{j+1}\leq \frac 3{10}$.

Then, $\frac12(a_{j+1}+a_{j+2})>\frac 25$ implying $a_{j+2}>\frac 45-a_{j+1}\geq \frac 45-\frac 3{10}=\frac 12$ implying, as before,
\begin{align*} V_n&\geq \sum_{n=2}^\infty \int_{J_{2^{n-1}1}}(x-\frac 12)^2 dP\\
&= \sum_{n=2}^\infty  \frac 1 {75^n}V + \sum_{n=2}^\infty  \frac 1 {3^n}\Big (\frac {5^n-4}{5^{n-1} 10}\Big)^2- \sum_{n=2}^\infty  \frac 1 {3^n}  \frac {5^n-4}{5^{n-1} 10}+\frac 14\sum_{n=2}^\infty \frac 1 {3^n}\\
&=\frac{1}{1350}>V_n,
\end{align*}
which leads to a contradiction.

Case~2. $  \frac 3{10}\leq a_{j+1}<\frac 25$.

Then, $\frac 12(a_j+a_{j+1})<\frac 15$ implying $a_j\leq \frac 25-a_{j+1}\leq \frac 25-\frac 3{10}=\frac 1{10}$. Then,
\[V_n\geq \int_{J_{13}}(x-\frac 1{10})^2 dP=\frac{37}{50625}>V_n,\]
which yields a contradiction.

Thus, by Case~1 and Case~2, we can conclude that $\ga_n$ does not contain any point from the open interval $(\frac 15, \frac 25)$. Reflecting the situation with respect to the point $\frac 12$, we can conclude that $\ga_n$ does not contain any point from the open interval $(\frac 35, \frac 45)$ as well. To prove the last part of the proposition, we proceed as follows:  $a_j=\max\set{a_i : a_i<\frac 15 \te{ for } 1\leq i\leq n-2}$. Then, $a_j$ is the rightmost element in $\ga_n\ii J_1$, and $a_{j+1}\in \ga_n\ii J_2$. Suppose that the Voronoi region of $a_j$ contains points from $J_2$. Then, $\frac 12(a_j+a_{j+1})>\frac 25$ implying $a_{j+1}>\frac 45-a_j\geq \frac 45-\frac 15=\frac 35$, which yields a contradiction as $a_{j+1}\in J_2$. Thus, the Voronoi region of any point in $\ga_n\ii J_1$ does not contain any point $J_2$, and so from $J_3$ as well. Similarly, we can prove that the Voronoi region of any point in $\ga_n\ii J_2$ does not contain any point from $J_1$ and $J_3$, and the Voronoi region of any point in $\ga_n\ii J_3$ does not contain any point from $J_1$ and $J_2$. Thus, the proof of the proposition is complete.
\end{proof}

The following lemma is similar to Lemma~4.5 in \cite{GL2}.
\begin{lemma} \label{lemmaB4}
Let $n\geq 3$, and let $\ga_n$ be an optimal set of $n$-means such that $\ga_n\ii J_j\neq \es$ for all $1\leq j\leq 3$, and $\ga_n$ does not contain any point from the open intervals $(\frac 15, \frac 25)$ and $(\frac 35, \frac 45)$. Further assume that the Voronoi region of any point in $\ga_n\ii J_j$ does not contain any point from $J_i$, where $1\leq i\neq j\leq 3$. Set $\gb_j:=\ga_n\ii J_j$, and $n_j:=\te{card }(\gb_j)$ for $1\leq j\leq 3$.
Then, $S_j^{-1}(\gb_j)$ is an optimal set of $n_j$-means, and
$V_{n}=\frac 1{75}(V_{n_1}+V_{n_2}+V_{n_3}).$
\end{lemma}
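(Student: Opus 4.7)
The plan is to prove this by a two-step argument, first computing $V_n$ in terms of the sub-distortions over each $J_j$, and then transferring optimality back and forth between the global problem and the three local problems via the similarity maps.

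First I would establish the distortion formula. Because each $\gb_j \subseteq J_j$ is nonempty and, by hypothesis, the Voronoi region $M(a\mid \ga_n)$ of every $a\in \gb_j$ is disjoint from $J_i$ for $i\neq j$, it follows that on $J_j$ (up to a $P$-null set) the nearest point in $\ga_n$ to any $x\in J_j$ lies in $\gb_j$. Since $P$ is supported on $J_1\cup J_2\cup J_3$, this lets me write
\[
V_n \;=\; V(P;\ga_n) \;=\; \sum_{j=1}^3 \int_{J_j}\min_{a\in \gb_j}(x-a)^2\,dP.
\]
Now I apply Lemma~\ref{lemma1} (with $r=1$) together with the similarity $S_j(x)=\tfrac15 x+\tfrac25(j-1)$ to each term: substituting $x\mapsto S_j(x)$ and pulling the contraction factor out yields
\[
\int_{J_j}\min_{a\in \gb_j}(x-a)^2\,dP \;=\; \tfrac13\cdot\tfrac{1}{25}\int\min_{a'\in S_j^{-1}(\gb_j)}(x-a')^2\,dP \;=\; \tfrac{1}{75}\,V\bigl(P;S_j^{-1}(\gb_j)\bigr).
\]
Summing gives $V_n=\tfrac{1}{75}\sum_{j=1}^3 V(P;S_j^{-1}(\gb_j)) \ge \tfrac{1}{75}\sum_{j=1}^3 V_{n_j}$, since $S_j^{-1}(\gb_j)$ has exactly $n_j$ elements.

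For the reverse inequality, which is where the optimality of $\ga_n$ enters, I would argue by contradiction. Suppose that for some index $j_0$ the set $S_{j_0}^{-1}(\gb_{j_0})$ is not an optimal set of $n_{j_0}$-means, so there exists $\gg_{j_0}\subset\D R$ with $\mathrm{card}(\gg_{j_0})\le n_{j_0}$ and $V(P;\gg_{j_0})<V(P;S_{j_0}^{-1}(\gb_{j_0}))$. For $j\neq j_0$ set $\gg_j:=S_j^{-1}(\gb_j)$, and form
\[
\tilde\ga \;:=\; \bigcup_{j=1}^3 S_j(\gg_j),
\]
which has at most $n_1+n_2+n_3=n$ points. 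Since $\min_{a\in\tilde\ga}(x-a)^2 \le \min_{a\in S_j(\gg_j)}(x-a)^2$ for $x\in J_j$, the same change-of-variables calculation as above yields
\[
V(P;\tilde\ga) \;\le\; \sum_{j=1}^3 \int_{J_j}\min_{a\in S_j(\gg_j)}(x-a)^2\,dP \;=\; \tfrac{1}{75}\sum_{j=1}^3 V(P;\gg_j) \;<\; \tfrac{1}{75}\sum_{j=1}^3 V(P;S_j^{-1}(\gb_j)) \;=\; V_n,
\]
which contradicts the optimality of $\ga_n$. Hence each $S_j^{-1}(\gb_j)$ is an optimal set of $n_j$-means, so $V(P;S_j^{-1}(\gb_j))=V_{n_j}$, and combining with the first step gives the equality $V_n=\tfrac{1}{75}(V_{n_1}+V_{n_2}+V_{n_3})$.

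The only subtle point is the upper bound step: one must be careful that $\tilde\ga$ is being scored on a larger index set than each $S_j(\gg_j)$ individually, which only helps. Everything else is a straightforward consequence of the self-similarity of $P$ through Lemma~\ref{lemma1} together with the disjointness of Voronoi regions across the three pieces $J_1,J_2,J_3$ that we are explicitly given as a hypothesis. No new geometric estimates are required beyond the routine verification that the Voronoi separation guaranteed by the hypothesis reduces the global minimization to three independent local ones.
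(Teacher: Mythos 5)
Your proof is correct and is precisely the self-similarity/replacement argument that the paper invokes by citing Lemma~4.5 of Graf--Luschgy and Lemma~3.5 of \cite{R3}: decompose $V_n$ over the pieces $J_j$ using the Voronoi-separation hypothesis, rescale by the similarity to get $V_n=\tfrac{1}{75}\sum_j V(P;S_j^{-1}(\gb_j))\ge\tfrac{1}{75}\sum_j V_{n_j}$, and then rule out strict inequality by pulling back an improved local quantizer, contradicting optimality of $\ga_n$. No substantive difference from the paper's intended proof.
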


\begin{proof} By the hypothesis, $\gb_j \neq \emptyset ,$ for all $1\leq j\leq 3 , $ and $\ga_n$ does not contain any point from the open intervals $(\frac 15, \frac 25)$ and $(\frac 35, \frac 45)$. Hence, $  \ga_n=\mathop{\uu}\limits_{j=1}^3 \gb_j$.   Since $\ga_n$  is an optimal set of $n$-means,
\begin{equation*} \label{eq46}
V_n=\sum_{j=1}^3\int_{J_j} \min_{a\in \ga_n} \|x-a\|^2 dP=\sum_{j=1}^3\int_{J_j} \min_{a\in \gb_j} \|x-a\|^2 dP.
\end{equation*}
Now, using Lemma~\ref{lemma1} we have
\begin{equation}\label{eq47}
V_n=\frac{1}{75} \sum_{j=1}^3\int \min_{a\in \gb_j} \|x-S_j^{-1}(a)\|^2 dP=\frac{1}{75} \sum_{j=1}^3\int \min_{a\in S_j^{-1}(\gb_j)} \|x-a\|^2 dP.
\end{equation}
If $S_1^{-1}(\gb_1)$ is not an optimal set of $n_1$-means, then we can find a set $\gg_1\sci \D R^2$ with card$(\gg_1)=n_1$ such that
\[\int \min_{a\in \gg_1} \|x-a\|^2 dP <\int \min_{a\in S_1^{-1}(\gb_1)} \|x-a\|^2 dP.\]
But, then $S_1(\gg_1)\uu \gb_2\uu \gb_3$ will be a set of cardinality $n$, and
\begin{align*}
&\int \min\set{ \|x-a\|^2 : a\in S_1(\gg_1)\uu \gb_2\uu \gb_3}dP\\
&=\int_{J_1} \min_{a\in S_1(\gg_1)} \|x-a\|^2 dP+\frac{1}{75} \sum_{j=2}^3\int \min_{a\in S_j^{-1}(\gb_j)} \|x-a\|^2 dP\\
&=\frac 1 {75} \int \min_{a\in S_1(\gg_1)} \|x-S_1^{-1}(a)\|^2 dP+\frac{1}{75} \sum_{j=2}^3\int \min_{a\in S_j^{-1}(\gb_j)} \|x-a\|^2 dP\\
&=\frac 1 {75} \int \min_{a\in \gg_1} \|x-a\|^2 dP+\frac{1}{75} \sum_{j=2}^3\int \min_{a\in S_j^{-1}(\gb_j)} \|x-a\|^2 dP\\
&<\frac 1 {75} \int \min_{a\in S_1^{-1}(\gb_1)} \|x-a\|^2 dP+\frac{1}{75} \sum_{j=2}^3\int \min_{a\in S_j^{-1}(\gb_j)} \|x-a\|^2 dP.
\end{align*}
Thus by \eqref{eq47}, we have $\int \min\set{ \|x-a\|^2 : a\in S_1(\gg_1)\uu \gb_2\uu \gb_3}dP<V_n$, which contradicts the fact that $\ga_n$ is an optimal set of $n$-means, and so $S_1^{-1}(\gb_1)$ is an optimal set of $n_1$-means. Similarly, one can show that $S_j^{-1}(\gb_j)$ are optimal sets of $n_j$-means for $j=2, 3$. Thus, \eqref{eq47} implies that $V_n=\frac {1}{75} \left(V_{n_1}+V_{n_2}+V_{n_3}\right)$. This completes the proof of the lemma.
\end{proof}

Let us now state and prove the following theorem which gives the optimal sets of $n$-means for all $n\geq 3$.
\begin{theorem}  \label{Th1}
Let $P$ be the standard triadic Cantor distribution on $\D R$ with support the Cantor set $C$ generated by the three contractive similarity mappings $S_j$ for $j=1,2, 3$. Let $n\in \D N$ with $n\geq 3$. Then, the set $\ga_n:=\ga_n(I)$ given by Definition~\ref{defi0} forms an optimal set of $n$-means for $P$ with the corresponding quantization error $V_n:=V(P;\ga_n(I))$, where $V(P; \ga_n(I))$ is given by Proposition~\ref{prop00}.
\end{theorem}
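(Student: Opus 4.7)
The plan is to argue by strong induction on $n$. For $3\leq n\leq 8$ the theorem is already supplied by Lemma~\ref{lemmaB1} and Remark~\ref{remark0}, so I would fix $n\geq 9$ and assume the result for every $m$ with $3\leq m<n$; the inductive hypothesis then pins down $V_m$ through Proposition~\ref{prop00}. Let $\ga_n$ be any optimal set of $n$-means. Proposition~\ref{propB1} guarantees $\ga_n\ii J_j\neq\es$ for each $j$ and forbids any Voronoi cell from crossing the gap between distinct first-level cylinders, so Lemma~\ref{lemmaB4} yields
\[
V_n=\tfrac{1}{75}\bigl(V_{n_1}+V_{n_2}+V_{n_3}\bigr),\qquad n_j:=\te{card}(\ga_n\ii J_j)\geq 1,\quad n_1+n_2+n_3=n,
\]
while simultaneously forcing $S_j^{-1}(\ga_n\ii J_j)$ to be optimal for every $j$.

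The problem thus reduces to the discrete minimization of $f(n_1)+f(n_2)+f(n_3)$ over positive integer triples summing to $n$, where $f(1):=V$ and $f(m):=V_m$ for $2\leq m\leq n-1$. The decisive observation is that $f$ is \emph{convex} and piecewise linear on $\D N$. Proposition~\ref{prop00} gives slopes $(V_2-V)/75^\ell$ on $[3^\ell,2\cdot 3^\ell]$ and $(V_3-V_2)/75^\ell$ on $[2\cdot 3^\ell,3^{\ell+1}]$; with $V=\tfrac 19$, $V_2=\tfrac{821}{28125}$, $V_3=\tfrac 1{225}$ one checks continuity at every breakpoint and that the slope strictly increases there (the inequalities $V_3-2V_2+V>0$ at $m=2\cdot 3^\ell$ and $76V_2-V-75V_3>0$ at $m=3^{\ell+1}$ both hold), so $f$ is convex on $\D N$ and strictly convex across each breakpoint. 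A standard discrete exchange argument then shows that $\sum_j f(n_j)$ is minimized exactly when every $n_j$ lies in the single linear piece of $f$ containing $n/3$: for $3^{\ell(n)}\leq n\leq 2\cdot 3^{\ell(n)}$ this forces $n_j\in[3^{\ell(n)-1},2\cdot 3^{\ell(n)-1}]$, and for $2\cdot 3^{\ell(n)}<n<3^{\ell(n)+1}$ it forces $n_j\in[2\cdot 3^{\ell(n)-1},3^{\ell(n)}]$. Because $f$ is linear on that piece, every balanced triple produces the same sum, which coincides with the value of $V(P;\ga_n(I))$ from Proposition~\ref{prop00}.

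To recover the form $\ga_n(I)$ of Definition~\ref{defi0}, I apply the inductive hypothesis inside each $J_j$ to write $S_j^{-1}(\ga_n\ii J_j)=\ga_{n_j}(I_j)$ for some admissible $I_j\sci\set{1,2,3}^{\ell(n)-1}$; then $I:=\UU_{j=1}^3\set{j\gs:\gs\in I_j}\sci\set{1,2,3}^{\ell(n)}$ has exactly the cardinality demanded by Definition~\ref{defi0} and satisfies $\ga_n=\ga_n(I)$. Conversely, every $\ga_n(I)$ built from Definition~\ref{defi0} corresponds to a balanced triple $(n_1,n_2,n_3)$ in the sense above, and therefore attains the minimum, giving both optimality and the explicit value of $V_n$. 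The main obstacle is the convexity check for $f$ together with the discrete exchange step across breakpoints: transfers within a single linear piece leave $\sum f$ unchanged---this invariance is exactly what allows the arbitrary choice of $I$ in Definition~\ref{defi0}---whereas transfers across a breakpoint strictly decrease $\sum f$, which is what forces every optimal $\ga_n$ to take the prescribed form.
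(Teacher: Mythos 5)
Your proposal is mathematically sound and reaches the right conclusion, but it takes a genuinely different route from the paper. Both proofs pass through Lemma~\ref{lemmaB4} (and Proposition~\ref{propB1}) to split $V_n=\tfrac{1}{75}(V_{n_1}+V_{n_2}+V_{n_3})$ and thereby reduce the problem to a minimization of $\sum_j V_{n_j}$ over triples with $\sum n_j=n$. From that point the paper proceeds by brute force: it introduces integers $p,q,r$ with $3^p\leq n_1\leq 2\cdot3^p$, etc., derives the numerical inequality $\tfrac{3V}{V_2}\approx 11.419\geq 25^{m-p-1}+25^{m-q-1}+25^{m-r-1}$ from a candidate-set comparison, and eliminates $p,q,r\neq m-1$ by a four-case argument (case (iii) alone requires going back to the full inequality \eqref{eq76} rather than the crude bound \eqref{eq45}). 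You instead observe that $n\mapsto V_n$ is, by Proposition~\ref{prop00} and the identity $V_3=V/25$, a continuous, piecewise-linear, convex function of $n$ whose slope strictly increases at each breakpoint $2\cdot3^\ell$ and $3^{\ell+1}$ (the two inequalities $V-2V_2+V_3>0$ and $76V_2-V-75V_3>0$, which I have checked numerically), so a one-step exchange argument immediately pins the minimizing triples to a single linear piece — i.e.\ all $n_j$ in $[3^{\ell(n)-1},2\cdot3^{\ell(n)-1}]$ or all in $[2\cdot3^{\ell(n)-1},3^{\ell(n)}]$. Linearity on that piece then simultaneously explains why every admissible $I$ in Definition~\ref{defi0} works, which in the paper is only implicit. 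Your approach buys a shorter, conceptually transparent argument that also generalizes more readily to $k$-adic distributions; the paper's approach is more elementary in the sense that it avoids talking about convexity but at the cost of delicate case-by-case numerics. One small point worth spelling out if you write this up: the reduction to the minimization needs both directions, namely that the optimal set's triple achieves $V_n=\tfrac{1}{75}\sum V_{n_j}$ (Lemma~\ref{lemmaB4}) \emph{and} that any triple can be realized by a candidate set $\bigcup_j S_j(\gamma_j)$ with distortion $\leq\tfrac{1}{75}\sum V_{m_j}$ (a Corollary~\ref{cor0}--type computation); you use the second direction implicitly when you claim every balanced triple "attains the minimum."
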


\begin{proof} We will proceed by induction on $\ell(n)$. If $\ell(n)=1$, then the theorem is true by Remark~\ref{remark0}. Let us assume that the theorem is true for all $\ell(n)<m$, where $m\in \D N$ and $m\geq 2$. We now show that the theorem is true if $\ell(n)=m$. Let us first assume that $3^m\leq n\leq 2\cdot 3^m$.
Let $\ga_n$ be an optimal set of $n$-means for $P$ such that $3^m\leq n\leq 2\cdot 3^m$. Let $\te{card }(\ga_n\ii J_j)=n_j$ for $j=1, 2,3$. Then, by Lemma~\ref{lemmaB4}, we have
\begin{equation} \label{eq341} V_{n}=\frac 1{75}(V_{n_1}+V_{n_2}+V_{n_3}).
\end{equation}
Without any loss of generality, we can assume that $n_1\geq n_2\geq n_3$. Let $p, q, r\in \D N$ be such that
\begin{equation}\label{eq35} 3^p\leq n_1\leq 2\cdot 3^p, \   3^q\leq n_2\leq 2\cdot 3^q, \te{ and } 3^r\leq n_3\leq 2\cdot 3^r.
\end{equation}
We will show that $p=q=r=m-1$. Since $n_1\geq n_2\geq n_3$, we have $n_1\geq 3^{m-1}$, and $n_3\leq 2\cdot 3^{m-1}$ implying $r\leq m-1\leq p$. If $\tilde V_n$ is the distortion error due to the set
 $\set{a(\gs) : \gs \in \set{1, 2, 3}^{m}\setminus I} \uu \left(\uu_{\gs \in I} S_\gs(\ga_2)\right)$, by Proposition~\ref{prop00}, we have
\begin{align*}
\tilde V_n=\frac 1{75^{m}} \left((2\cdot 3^{m}-n)V+ (n-3^{m})V_2\right).
\end{align*}
Thus, by the induction hypothesis, \eqref{eq341} implies $\tilde V_n\geq V_n$ yielding
\begin{align*}
&\frac 1{75^{m}} \left((2\cdot 3^{m}-n)V+ (n-3^{m})V_2\right)\geq\frac 1{75^{p+1}} \left((2\cdot 3^{p}-n_1)V+ (n_1-3^{p})V_2\right)\\
&\qquad +\frac 1{75^{q+1}} \left((2\cdot 3^{q}-n_2)V+ (n_2-3^{q})V_2\right)+\frac 1{75^{r+1}} \left((2\cdot 3^{r}-n_3)V+ (n_3-3^{r})V_2\right)
\end{align*}
which upon simplification gives,
\begin{align} \label{eq76}
&3\left((2V-V_2)-\frac {n}{3^m}(V-V_2)\right)\geq25^{m-p-1}\left((2V-V_2)-\frac {n_1}{3^p}(V-V_2)\right)\\
&\qquad +25^{m-q-1} \left((2V-V_2)-\frac {n_2}{3^q}(V-V_2)\right)+25^{m-r-1}\left((2V-V_2)-\frac {n_3}{3^r}(V-V_2)\right).\notag
\end{align}
Hence, using the bounds of $\frac {n}{3^m}$, $\frac {n_1}{3^p}$, $\frac {n_2}{3^q}$, and $\frac {n_3}{3^r}$, i.e., putting $\frac {n}{3^m}=1$, and $\frac {n_1}{3^p}=\frac {n_2}{3^q}=\frac {n_3}{3^r} =2$, from the above inequality, we obtain
\begin{equation} \label{eq45} 11.419 = \frac {3V}{V_2}\geq 25^{m-p-1} +25^{m-q-1}+25^{m-r-1}.
\end{equation}
Recall that $r\leq m-1\leq p$. Moreover, $p>m$ is not possible.  Thus, from \eqref{eq45}, to obtain the values of $p$, $q$, and $r$, we proceed as follows:

$(i)$ If $p=r=m-1$, then \eqref{eq45} implies that $q=m-1$.

$(ii)$ If $p=m-1$ and $r=m-2$, then $11.419\geq 1+25^{m-q-1}+25$, which gives a contradiction, in fact, a contradiction arises for $p=m-1$ and any $r< m-1$.

$(iii)$ If $p=m$ and $r=m-1$, then \eqref{eq45} implies that $q\geq m-1$. Notice that if $q=m$, then
$n>n_1+n_2\geq 3^m+3^m=2\cdot 3^m$, which is a contradiction. Also, $q>m$ is not possible. So, $q=m-1$ is the only choice, but then also a contradiction arises as shown below:
For $p=m, \, q=m-1$, and $r=m-1$, \eqref{eq76} implies
\begin{align*}
&3\left((2V-V_2)-\frac {n}{3^m}(V-V_2)\right)\geq  (2V-V_2)(\frac 1{25}+2)-(V-V_2)(\frac 1{25} \frac {n_1}{3^m} +\frac {n_2}{3^{m-1}}+\frac {n_3}{3^{m-1}}),
\end{align*}
i.e., \[(2V-V_2) \frac {24}{25} \geq (V-V_2) (\frac n{3^{m-1}}-\frac {n_2}{3^{m-1}}-\frac {n_3}{3^{m-1}}-\frac 1 {25}\frac {n_1}{3^{m}}),\]
which upon simplification yields,
\[\frac {n_1}{3^m}\leq \frac{2V-V_2}{V-V_2} \frac {12}{37}=\frac{5429}{7104}<1,\]
which is a contradiction because $3^p\leq n_1\leq 2\cdot 3^p$, and $p=m$.

$(iv)$ If $p=m$ and $r=m-2$, then \eqref{eq45} implies $11.419\geq \frac 1{25} +25^{m-q-1}+25$, which is not true. In fact, a contradiction arises for $p=m$ and any $r< m-1$.

Hence, we can conclude that $p=q=r=m-1$. Since by Lemma~\ref{lemmaB4}, for $S_j^{-1}(\ga_n\ii J_j)$ is an optimal set of $n_j$ means where $3^{m-1}\leq n_j\leq 2\cdot 3^{m-1}$, we have
\[S_j^{-1}(\ga_n\ii J_j)=\set{a(\gs) : \gs \in \set{1, 2, 3}^{m-1}\setminus I_j} \uu \left(\uu_{\gs \in I_j} S_\gs(\ga_2)\right),\]
where $I_j\ci \set{1, 2, 3}^{m-1}$ with $\te{card }(I_j)=n_j-3^{m-1}$ for $1\leq j\leq 3$. Hence,
\[\ga_n:=\ga_n(I)=\UU_{j=1}^3 S_j^{-1}(\ga_n\ii J_j)=\set{a(\gs) : \gs \in \set{1, 2, 3}^{\ell(n)}\setminus I} \uu \left(\uu_{\gs \in I} S_\gs(\ga_2)\right),\]
where $I\ci \set{1, 2, 3}^{m}$ with $\te{card }(I)=n-3^{m}$, is an optimal set of $n$-means. The corresponding quantization error is
\[V_n=\frac 1{75^{m}} \left((2\cdot 3^{m}-n)V+ (n-3^{m})V_2\right)=V(P; \ga_n(I)),\]
where $V(P; \ga_n(I))$ is given by Proposition~\ref{prop00}. Thus, the theorem is true if $3^m\leq n\leq 2\cdot 3^m$. Similarly, we can prove that the theorem is true if $2\cdot 3^{m}< n< 3^{m+1}$. Hence, by the induction principle, the proof of the theorem is complete.
\end{proof}

\begin{rem}
In Theorem~\ref{Th1}, if $n=3^{\ell(n)}$, then $I$ is the empty set implying $\ga_n:=\ga_n(I)=\set{a(\gs) : \gs\in \set{1, 2, 3}^{\ell(n)}}$, and the corresponding quantization error is given by $V_n=\frac 1{25^{\ell(n)}}V$.
\end{rem}

 \section{Quantization dimension and quantization coefficient} \label{sec3}

Since the Cantor set $C$ under investigation satisfies the strong separation condition, with each $S_j$ having contracting factor of $\frac{1}{5}$, the Hausdorff dimension of the Cantor set is equal to the similarity dimension.  Hence, from the equation $3(\frac 1{5})^\gb=1$,  we have $\dim_{\te{H}}(C)=\beta =\frac {\log 3}{\log 5}$.  By Theorem~14.17 in \cite{GL1}, the quantization dimension $D (P) $ exists and is equal to $\gb$. In this section, we show that  $\gb$ dimensional quantization coefficient for $P$ does not exist.

\begin{lemma} \label{lemmaBB1}
Let $f : [1, 2]\to \D R$ be a function defined by
\[f(x)=x^{\frac 2 \gb}\left((2V-V_2)-x(V-V_2)\right)=\frac 1{28125}(5429 - 2304 x)x^{2/\beta }.\]
Then,

$(i)$ $1< \frac{5429}{1152 (\beta +2)}<2$, and

$(ii)$ $f(\frac{5429}{1152 (\beta +2)}>f(2)>f(1)$, and $f([1, 2])=[f(1), f(\frac{5429}{1152 (\beta +2)})]$.
\end{lemma}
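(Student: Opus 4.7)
My plan is to treat this as a standard single-variable calculus problem: compute $f'$, locate its unique positive root, use the sign of $f'$ to determine monotonicity on $[1,2]$, and then read off the claimed comparisons from the endpoint and critical values. For part (i), note that $1 < \frac{5429}{1152(\beta+2)} < 2$ is equivalent to $\frac{821}{2304} < \beta < \frac{3125}{1152}$. Since $\beta = \frac{\log 3}{\log 5}$ with $\sqrt 5 < 3 < 5$, we have $\frac 12 < \beta < 1$, and the bounds $\frac{821}{2304} < \frac 12$ and $1 < \frac{3125}{1152}$ make both inequalities immediate.

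For part (ii), direct differentiation gives
\[
f'(x) = \frac{2\, x^{2/\beta - 1}}{28125\,\beta}\bigl(5429 - 1152(\beta+2)x\bigr),
\]
so on $(0,\infty)$ the only critical point is $x^* = \frac{5429}{1152(\beta+2)}$, with $f' > 0$ on $(0, x^*)$ and $f' < 0$ on $(x^*, \infty)$. By (i), $x^* \in (1,2)$, hence $f$ is strictly increasing on $[1, x^*]$ and strictly decreasing on $[x^*, 2]$; in particular $f(x^*) = \max_{[1,2]} f$ and both $f(1)$ and $f(2)$ lie strictly below $f(x^*)$. For the endpoint comparison, evaluate $f(1) = \frac{3125}{28125} = \frac 19$ and $f(2) = \frac{821}{28125}\cdot 2^{2/\beta}$. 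Since $\beta < 1$ forces $\frac{2}{\beta} > 2$ and hence $2^{2/\beta} > 4$, we obtain $f(2) > \frac{4\cdot 821}{28125} = \frac{3284}{28125} > \frac{3125}{28125} = f(1)$. Continuity of $f$ on $[1,2]$ combined with this monotonicity picture then forces $f([1,2]) = [f(1), f(x^*)]$.

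The only subtle step is controlling the irrational exponent $2/\beta$; here the clean bound $\beta < 1$ provides more than enough slack, since $2^{2/\beta} > 4$ while $3125/821 \approx 3.81$. Everything else reduces to routine sign analysis of the derivative and an arithmetic evaluation at the endpoints $x = 1$ and $x = 2$.
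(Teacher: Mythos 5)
Your proof is correct and follows the same overall strategy as the paper: compute $f'$, locate the unique critical point $x^* = \frac{5429}{1152(\beta+2)}$, and use the sign of $f'$ on either side to get the monotonicity picture and endpoint comparisons. The difference is one of rigor rather than route: the paper verifies that $x^* \in (1,2)$, that $f'(1) > 0 > f'(2)$, and that $f(2) > f(1)$ by quoting decimal approximations ($x^* = 1.75675$, $f'(1) = 0.24363$, $f'(2) = -0.298399$, $f(2) = 0.22246$), whereas you replace these with exact algebraic bounds — reducing (i) to $\frac{821}{2304} < \beta < \frac{3125}{1152}$, which follows from $\frac 12 < \beta < 1$ (i.e.\ $\sqrt 5 < 3 < 5$), and getting $f(2) > f(1)$ from $2^{2/\beta} > 4 > \frac{3125}{821}$. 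Your version eliminates reliance on floating-point evaluation and is the tighter argument; the paper's is a slightly quicker read if one trusts the decimals.
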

\begin{proof}
Notice that $f$ is a continuous function on the closed interval $[1, 2]$, and
\[f'(x)=\frac{x^{\frac{2}{\beta }-1} (10858-2304 (\beta +2) x)}{28125 \beta }.\]
$f'(x)=0$ implies that $x=\frac{5429}{1152 (\beta +2)}$. Moreover, $f'(1)=-\frac{2 (1152 \beta -3125)}{28125 \beta }>0$, and $f'(2)=\frac{4^{1/\beta } (821-2304 \beta )}{28125 \beta }<0$. Thus, $f$ is maximum at $x=\frac{5429}{1152 (\beta +2)}$. Again, $f(2)=\frac{821}{28125}4^{1/\beta }>\frac 19=f(1)$. Hence, $1< \frac{5429}{1152 (\beta +2)}<2$, and $f(\frac{5429}{1152 (\beta +2)})>f(2)>f(1)$, and $f([1, 2])=[f(1), f(\frac{5429}{1152 (\beta +2)})]$. Thus, the proof of the lemma is complete.
\end{proof}

\begin{theorem}
The $\gb$-dimensional quantization coefficient does not exist.
\end{theorem}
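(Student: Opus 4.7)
The plan is to reduce the sequence $n^{2/\beta}V_n$ to the one-variable function $f$ from Lemma~\ref{lemmaBB1}, and then exhibit two subsequences along which this function takes distinct limit values.

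\textbf{Step 1: Algebraic reduction.} Since $\beta=\log 3/\log 5$, we have $3^{1/\beta}=5$, hence $3^{2/\beta}=25$ and $75=3\cdot 3^{2/\beta}$. For $n$ with $3^{\ell(n)}\leq n\leq 2\cdot 3^{\ell(n)}$, set $x_n=n/3^{\ell(n)}\in[1,2]$. By Theorem~\ref{Th1} and Proposition~\ref{prop00},
\[
V_n=\frac{1}{75^{\ell(n)}}\bigl((2\cdot 3^{\ell(n)}-n)V+(n-3^{\ell(n)})V_2\bigr)=\frac{1}{25^{\ell(n)}}\bigl((2-x_n)V+(x_n-1)V_2\bigr),
\]
while $n^{2/\beta}=x_n^{2/\beta}\cdot 3^{2\ell(n)/\beta}=x_n^{2/\beta}\cdot 25^{\ell(n)}$. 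Multiplying the two expressions yields
\[
n^{2/\beta}V_n=x_n^{2/\beta}\bigl((2V-V_2)-x_n(V-V_2)\bigr)=f(x_n).
\]

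\textbf{Step 2: Two subsequences with different limits.} Taking $n_k=3^k$ gives $x_{n_k}=1$, so $n_k^{2/\beta}V_{n_k}=f(1)=V=\tfrac{1}{9}$. Taking instead $n_k=2\cdot 3^k$, which still falls in the first regime, gives $x_{n_k}=2$ and $n_k^{2/\beta}V_{n_k}=f(2)=2^{2/\beta}V_2$. Part~(ii) of Lemma~\ref{lemmaBB1} tells us $f(2)>f(1)$, so these two subsequential limits are distinct. Consequently $\lim_{n\to\infty}n^{2/\beta}V_n$ fails to exist, i.e., the $\beta$-dimensional quantization coefficient does not exist.

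The only real technicality is verifying the identity in Step~1; once $n^{2/\beta}V_n$ is recognized as $f(x_n)$, the conclusion is immediate from Lemma~\ref{lemmaBB1}. As an optional strengthening, choosing $n_k=\lfloor x_0\cdot 3^k\rfloor$ with $x_0=\tfrac{5429}{1152(\beta+2)}$ gives $x_{n_k}\to x_0$ and hence $n_k^{2/\beta}V_{n_k}\to f(x_0)$; together with the density of $\{x_n\}$ in $[1,2]$, this shows that the set of accumulation points of $n^{2/\beta}V_n$ fills the entire interval $[f(1),f(x_0)]$, so not only does the limit fail to exist, but the oscillation is as large as Lemma~\ref{lemmaBB1} allows.
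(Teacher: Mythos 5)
Your proof is correct. The central algebraic identity $n^{2/\beta}V_n=f(x_n)$ for $n$ in the first regime is exactly the reduction used in the paper (equation \eqref{eq451}), and once that is in hand, exhibiting the two explicit subsequences $n_k=3^k$ and $n_k=2\cdot 3^k$ with limits $f(1)=V=\tfrac{1}{9}$ and $f(2)=2^{2/\beta}V_2$, which differ by Lemma~\ref{lemmaBB1}(ii), is enough to conclude that $\lim_n n^{2/\beta}V_n$ does not exist. Where you diverge from the paper is in scope rather than method: the paper goes on to prove the stronger statement that the set of accumulation points of $(n_k^{2/\beta}V_{n_k})$ over the first regime is the full interval $[f(1),f(M)]$ with $M=\tfrac{5429}{1152(\beta+2)}$, constructing for each $x\in[1,2]$ the subsequence $n_{k_\ell}=\lfloor x3^\ell\rfloor$ and arguing both inclusions. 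Your "optional strengthening" sketches exactly that construction, so you have identified where the extra work lies. The trade-off is clear: your two-subsequence argument is shorter and already settles the theorem as stated, while the paper's version also quantifies the oscillation of $n^{2/\beta}V_n$, which is of independent interest for understanding the fine asymptotics of the quantization error. One small point worth making explicit if you keep the short version: for $n=2\cdot 3^k$ both branches of Definition~\ref{defi0} apply consistently (it is the boundary case), so using the first-regime formula is legitimate; a one-line remark to that effect would remove any ambiguity.
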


\begin{proof} Let $M=\max\set{f(x) : 1\leq x\leq 2}$. Then, by Lemma~\ref{lemmaBB1}, we have $M=\frac{5429}{1152 (\beta +2)}$.
Let $(n_k)_{k\in \D N}$ be a subsequence of the set of natural numbers such that $3^{\ell(n_k)}\leq n_k<2\cdot 3^{\ell(n_k)}$. The assertion of the theorem will follow if we show that the set of accumulation points of the sequence $(n_k^{\frac 2\gb} V_{n_k})_{k\geq 1}$ is $[f(1), f(M)]$. Let  $y \in [f(1), f(M)], $ then $y=f(x)$ for some $x\in [1, 2]$. Set $n_{k_\ell}=\lfloor x 3^{\ell}\rfloor$, where $\lfloor x 3^{\ell}\rfloor$ denotes the greatest integer less than or equal to $ x 3^{\ell}$. Then, $n_{k_\ell}<n_{k_{\ell+1}}$ and $\ell(n_{k_\ell})=\ell$, where by $\ell(n_{k_\ell})=\ell$ it is meant that $3^\ell\leq n_{k_\ell}<2 \cdot 3^\ell$. Notice that then there exists $x_{k_\ell} \in [1, 2]$ such that $n_{k_\ell}=x_{k_\ell} 3^\ell$. Recall that $  3^{\frac 1 {\gb}}=5$, and if $3^{\ell(n)}\leq n\leq 2\cdot 3^{\ell(n)}$, then by Theorem~\ref{Th1}, we have
\begin{align*}
V_n=\frac 1{75^{\ell(n)}} \left((2\cdot 3^{\ell(n)}-n)V+ (n-3^{\ell(n)})V_2\right).
\end{align*}
Thus, putting the values of $n_{k_\ell}$ and $V_{n_{k_\ell}}$, we obtain
\begin{align*}
n_{k_\ell}^{\frac 2\gb} V_{n_{k_\ell}}&=n_{k_\ell}^{\frac 2\gb} \frac 1{75^\ell} \left((2\cdot 3^\ell-n_{k_\ell})V+ (n_{k_\ell}-3^\ell)V_2\right)
\end{align*}
yielding
\begin{equation} \label{eq451}
n_{k_\ell}^{\frac 2\gb} V_{n_{k_\ell}}=x_{k_\ell}^{\frac 2\gb} \left((2V-V_2) - x_{k_\ell}(V-V_2)\right)=f(x_{k_\ell}).
\end{equation}
Again, $x_{k_\ell}3^{\ell}\leq x3^\ell<x_{k_\ell}3^{\ell}+1$, which implies $x-\frac 1{3^{\ell}}< x_{k_\ell} \leq x$, and so, $\mathop{\lim}\limits_{\ell\to \infty} x_{k_\ell}=x$. Since $f$ is continuous, we have
\[\mathop{\lim}\limits_{\ell\to \infty}  n_{k_\ell}^{\frac 2\gb} V_{n_{k_\ell}}=f(x)=y,\]
which yields the fact that $y$ is an accumulation point of the subsequence $(n_k^{\frac 2\gb}  V_{n_k})_{k\geq 1}$ whenever $y\in [f(1), f(M)]$. To prove the converse, let $y$ be an accumulation point of the sequence  $(n_k^{\frac 2 \gb} V_{n_k})_{k\geq 1}$. Then, there exists a subsequence  $(n_{k_i}^{\frac 2\gb} V_{n_{k_i}})_{i\geq 1}$ of  $(n_k^{\frac 2\gb} V_{n_k})_{k\geq 1}$ such that $\mathop{\lim}\limits_{i\to \infty}n_{k_i}^{\frac 2\gb} V_{n_{k_i}}=y$. Set $\ell_{k_i}=\ell(n_{k_i})$ and $x_{k_i}=\frac{n_{k_i}}{3^{\ell_{k_i}}}$. Then,  $x_{k_i} \in[1, 2]$, and as shown in \eqref{eq451}, we have
\[n_{k_i}^{\frac 2\gb} V_{n_{k_i}}=f(x_{k_i}).\]
Let $(x_{k_{i_j}})_{j\geq 1}$ be a convergent subsequence of $(x_{k_i})_{i\geq 1}$, then we obtain
\[y=\lim_{i\to \infty} n_{k_i}^{\frac 2\gb} V_{n_{k_i}}=\lim_{j\to \infty}n_{k_{i_j}}^{\frac 2\gb}  V_{n_{k_{i_j}}}=\lim_{j\to \infty}f(x_{k_{i_j}}) \in [f(1), f(M)].\]
Thus, the set of accumulation points of the sequence $(n_k^{\frac 2\gb} V_{n_k})_{k\geq 1}$ is $[f(1), f(M)]$, i.e., the $\gb$-dimensional quantization coefficient for $P$ does not exist. Hence, the proof of the theorem is complete.
\end{proof}

\end{document}